\newif\ifonecolumn 
 \newcommand{\jfnote}[1]{\ifthenelse{\boolean{include-notes}}%
 {\textcolor{blue}{\textbf{[JF: #1]}}}{}}
\newcommand{\B}{\mathcal{B}}
\newcommand{\D}{\mathcal{D}}
\newcommand{\K}{\mathcal{K}}
\renewcommand{\L}{\mathcal{L}}
\newcommand{\M}{\mathcal{M}}
\newcommand{\R}{\mathcal{R}}
\newcommand{\T}{\mathcal{T}}
\newcommand{\U}{\mathcal{U}}
\newcommand{\V}{\mathcal{V}}
\newcommand{\DD}{\mathbb{D}}
\newcommand{\RR}{\mathbb{R}}
\newcommand{\UU}{\mathbb{U}}
\newcommand{\bu}{\bm{u}}
\newcommand{\bdelta}{\bm{d}}
\newcommand{\bbeta}{\bm{\beta}}
\newcommand{\bx}{\xi}
\newtheorem{definition}{Definition}
\newtheorem{theorem}{Theorem}
\newtheorem{lemma}{Lemma}
\newtheorem{proposition}{Proposition}
\newenvironment{for_journal}{\par\color{black}}{\par}
\title{\LARGE \bf
A Minimum Discounted Reward Hamilton-Jacobi Formulation for Computing Reachable Sets 
}
\author{
Anayo K. Akametalu\textsuperscript{1}, \and Shromona Ghosh\textsuperscript{1}, \and Jaime F. Fisac\textsuperscript{1}, and \and Claire J. Tomlin\textsuperscript{1}
\thanks{
\textsuperscript{1}Department of Electrical Engineering and Computer Sciences, 
        University of California, Berkeley. Cory Hall, Berkeley, CA 94720, United States.\newline
        {\tt\small \{kakametalu, shromona.ghosh, jfisac, tomlin\}~@eecs.berkeley.edu }}%
\thanks{
This work is supported by NSF under the CPS FORCES and VehiCal projects, by the UC-Philippine-California Advanced Research Institute, and the ONR MURI Embedded Humans. It is also supported through the Army Research Laboratory and was accomplished under Cooperative Agreement Number W911NF-17-2-0196; and in part by Toyota under the iCyPhy center. The research of A.K. Akametalu has received funding from the National GEM Consortium Fellowship.} 
}
\begin{document}
\maketitle
\thispagestyle{empty}
\pagestyle{empty}

\begin{abstract}
We propose a novel formulation for approximating reachable sets through a minimum discounted reward optimal control problem. The formulation yields a continuous solution that can be obtained by solving a Hamilton-Jacobi equation. Furthermore, the numerical approximation to this solution can be obtained as the unique fixed-point to a contraction mapping. This allows for more efficient solution methods that could not be applied under traditional formulations for solving reachable sets. In addition, this formulation provides a link between reinforcement learning and learning reachable sets for systems with unknown dynamics, allowing algorithms from the former to be applied to the latter. We use two benchmark examples, double integrator, and pursuit-evasion games, to show the correctness of the formulation as well as its strengths in comparison to previous work.
\end{abstract}

\section{Introduction \label{sec:intro}}

Computing reachable sets has been a popular approach for verifying and validating the behavior of dynamical systems. In the reachability problem, one specifies a target set in the state space and then aims to find the set of initial states admitting trajectories that can eventually enter the target within some specified time horizon. The target may represent a region we desire to drive the system to or keep the system away from. Due to its generality, Hamilton-Jacobi (HJ) reachability analysis, which casts the problem in the optimal control framework, has seen broad usage in many safety-critical applications including emergency landing of unmanned aerial vehicles (UAVs) \cite{Ding2016}, vehicle platooning \cite{Chen2015a}, safe learning \cite{Akametalu2014, Gillula2012}, collision-avoidance \cite{Hoffmann2008, Mitchell2005}, and many others \cite{Ding2011a, Huang2011}.

The standard HJ formulation for reachability solves a \emph{minimum reward} (MR) optimal control problem, by computing the viscosity solution of a particular time-dependent HJ partial differential equation (PDE) \cite{Mitchell2005}. Like most optimal control problems, the solution is solved approximately on a grid via value iteration, which recursively applies a \emph{backup operator} until convergence. One downside of MR optimal control problems is that the backup operator in this setting is not a contraction mapping, thus a specific initialization of value iteration is required for convergence to the correct solution. A contraction mapping allows for arbitrary initialization, and with a good initialization convergence can be accelerated. This is particularly useful when computing reachable sets for multiple systems with similar dynamics allowing for the solution of one problem to be used as an initialization for the other.

On the other hand, infinite horizon \emph{sum of discounted reward} (SDR) problems yield backup operators that are contraction mappings \cite{Bertsekas1995}. This allows for more efficient solution methods, like policy iteration \cite{Howard1964, Puterman1979} and multigrid approaches \cite{Alla2015, Chow1991}.   
 
Drawing inspiration from SDR problems, we propose a \emph{minimum discounted reward} (MDR) formulation for computing tight over- and under-approximations of infinite horizon reachable sets. This new formulation yields a contraction mapping, making it possible to extend policy iteration and multigrid approaches to this setting. In addition, it also provides a way forward for learning reachable sets when dynamics are unknown or difficult to model, a topic that has garnered some attention recently \cite{Akametalu2015,Djeridane2006}. This draws greatly from Reinforcement Learning (RL), which attempts to solve the infinite horizon SDR problem when the system model is unknown. Many RL algorithms boil down to finding the fixed-point of the backup operator associated with SDR. The techniques developed in RL, like temporal differencing \cite{Sutton1988} and Q-learning \cite{Watkins1992}, can naturally be extended to do the same in the MDR setting, thus facilitating research in learning reachable sets.

The paper is organized as follows. In Section \ref{sec:back} we briefly go over the MR formulation for HJ reachability analysis, and SDR problems. In Section \ref{sec:mdr} we describe the MDR formulation, and prove some relevant results. In Section \ref{sec:conv} we explore policy iteration and multigrid approaches within the context of the MDR. Section \ref{sec:learn} draws inspiration from RL and presents some preliminary ideas on how the formulation can be used for learning reachable sets. Section \ref{sec:sim} contains examples demonstrating the ideas developed throughout the paper, and we conclude the paper in Section \ref{sec:end}.

Here, both the one-player and adversarial two-player settings are considered, and we opt to use the terminology from optimal control for consistency. For example \emph{optimal control} will be used to refer to both settings, and we use terms like \emph{reward} instead of \emph{payoff} throughout.\footnote{Typically the terms \emph{optimal control} and \emph{reward} are reserved for the one player setting and \emph{zero sum differential game} and \emph{payoff} are the analogous terms in the adversarial two-player setting.}

\section{Background \label{sec:back}} 

We consider a two-player optimal control problem where the first player (control) wants to keep the system away from the target set for a given time horizon and the second player (disturbance) has the opposite objective.\footnote{We can also consider the case where the objectives are switched, and the control wants to drive the system towards the target, by making minor modifications to the material presented here.} Our goal is to characterize the \emph{infinite-horizon backward reachable set}, which is the set of initial states for which the disturbance wins the game. Going forward all mentions of reachable set refer to the infinite horizon backwards reachable set.

\subsection{System Model \label{subsec:dynamics}}

The analysis in this paper considers a fully observable system whose underlying dynamics may be non-deterministic, but bounded. 
We can formalize this as a dynamical system with state $x\in\RR^n$, and two inputs, $u\in\U\subset\RR^{n_u},  d\in\D\subset\RR^{n_d}$
(with $\U$ and $\D$ compact)
which we will refer to as the \emph{controller} and the \emph{disturbance}:
\begin{equation}\label{fxud}
\dot{x} = f(x,u, d).
\end{equation}

The flow field $f: \RR^n \times \U \times \D\rightarrow\RR^n$ is assumed to be Lipschitz continuous and bounded. In the single-player case we drop the disturbance input, and just have ${f(x,u):\RR^n \times \U \rightarrow\RR^n}$.

Letting $\UU $ and $\DD$ denote the collections of measurable%
	\footnote{A function $f:X\to Y$ between two measurable spaces $(X,\Sigma_X)$ and $(Y,\Sigma_Y)$
	is said to be measurable if the preimage of a measurable set in $Y$ is a measurable set in $X$, that is:
	$\forall V\in\Sigma_Y, f^{-1}(V)\in\Sigma_X$, with $\Sigma_X,\Sigma_Y$ $\sigma$-algebras on $X$,$Y$.}
functions $\bm u: [0,\infty)\to \U $ and $\bm d: [0,\infty)\to \D$ respectively,
and allowing the controller and disturbance to choose any such signals,
the evolution of the system
from any initial state $x$
is determined (see for example \cite{Coddington1955}, Ch.~2, Theorems~1.1,~2.1) by the unique continuous trajectory $\bx:[0,\infty)\to\RR^n$ solving
\begin{equation}\label{eq:xdot}
\begin{split}
\dot{\bx}(s) &= f(\bx(s),\bu(s),\bdelta(s)), \text{ a.e. }s\ge 0,\\
\bx(0) &= x.
\end{split}
\end{equation}
Note that this is a solution in Carath\'eodory's \emph{extended sense}, that is, it satisfies the differential equation \emph{almost everywhere} (i.e. except on a subset of Lebesgue measure zero).

Throughout our analysis, we will use the notation $\bx_{x}^{\bu,\bdelta}(\cdot)$ to denote the state trajectory $t\mapsto x$ corresponding to the initial condition $x\in\RR^n$, the control signal $\bu\in\UU$ and the disturbance signal $\bdelta\in\DD$.

\subsection{HJ Reachability: Minimum Distance to Target}
The target set $\T$ can be implicitly characterized as the sub-zero level set of a Lipschitz \emph{surface function} $l:\RR^n\rightarrow\RR$: 
\begin{equation}\label{eq:l}
x\in\T\iff l(x)<0.
\end{equation}
This function always exists, since we can simply choose the \emph{signed distance function} to $\T$,  $s_{\T}(x)$, which is Lipschitz continuous by construction.\footnote{ For any nonempty set $\M\subset\RR^m$, the signed distance function $s_{\M}:\RR^m\to\RR$  is defined as
$\inf_{y\in\M} |z-y|$ for points outside of $\M$ and $ -\inf_{y\in\RR^m\setminus\M} |z-y|$ for points inside $\M$, where $|\cdot|$ denotes a norm on $\RR^m$.} We use a clipped signed distance since the problem is solved over a fixed domain in practice, $l(x) = \min(\max(s_{\T}(x), -L), L)$ with $L>0$, where $L$ is usually taken to be the largest value (in magnitude) on the domain.

To express whether a given trajectory \emph{ever} enters the target set, let the functional $\V:\RR^n\times\UU\times\DD\to\RR$ assign to each initial state $x$ and input signals $\bu(\cdot)$, $\bdelta(\cdot)$ the lowest value of $l(\cdot)$ achieved by trajectory $\bx_{x}^{\bu, \bdelta}(\cdot)$ over all times $t\ge0$: 
\begin{equation}\label{eq:min_dist_functional}
\mathcal{V}\big(x,\bu(\cdot),\bdelta(\cdot)\big) := \inf_{t\ge 0}l\big(\bx_{x}^{\bu,\bdelta}(t)\big).
\end{equation}
This outcome $\V$, also referred to here as the \emph{minimum of rewards}\footnote{In this context reward refers to $l(x)$, but in general it can be any real-valued function.}, will be nonpositive if there exists any $t\in[0,\infty)$ at which the trajectory enters the target set, and will be strictly positive if the system avoids the target for all $t\ge 0$. 



The reachable set can then be obtained by solving a game between the controller and disturbance where the disturbance can use \emph{nonanticipative strategies} to respond to the controller's signal. The disturbance's set of nonanticipative strategies is $\B = \{\bbeta:\UU\to\DD\;|\;
\forall t\ge 0,\; \forall \bu(\cdot),\hat{\bu}(\cdot)\in\UU,$
${\big(\bu(\tau) \!=\! \hat{\bu}(\tau)\text{ a.e.} \tau\ge0\big)\Rightarrow
{\big(\bbeta[\bu](\tau) \!=\! \bbeta[\hat{\bu}](\tau)}{\text{ a.e.} \tau\ge0\big)}}\}$. Intuitively, the disturbance is given an instantaneous informational advantage in the game, but its actions at any given time can only depend on what the controller has done up to that time. With this in place we can define the \emph{value function} $V(x)$ and ultimately the reachable set $\R(\T)$. 
\begin{equation} \label{eq:val_function}
V(x)=\inf_{\beta[\bu](\cdot) \in \B} \sup_{\bu \in \UU}\mathcal{V}\big(x,\bu(\cdot),\beta[\bu](\cdot)\big),
\end{equation}
\begin{equation} \label{eq:reach_set}
\R(\T) = \{x \mid V(x) \le 0\}.
\end{equation}  

The optimization in \eqref{eq:val_function} is referred to here as the minimum reward optimal control problem. It has been shown that the value function for problems with outcome given by \eqref{eq:min_dist_functional} can be characterized as the unique viscosity solution to a particular variational inequality \cite{Barron1989,Barron1990} as shown in \cite{Fisac2015}. When the problem is solved over a finite time interval $[0,T]$, the \emph{finite-horizon value function} $V(x,t)$ can be computed by solving the HJ equation: 
\begin{equation} \label{eq:HJ_mr}
\begin{split} 
    & 0 = \min\left\{l(x)-V(x,t), \frac{\partial V}{\partial t}(x,t)+ \max_{u\in\U} \min_{ d\in\D} \!\!\frac{\partial V}{\partial x}(x,t) f(x,u, d)\right\} \;,\\
    &V(x,T) = l(x).
\end{split}
\end{equation}
 As $T \rightarrow \infty$, $V(x,t)$ becomes independent of $t$. We accordingly drop the dependence on $t$ and recover $V(x)$ as defined in \eqref{eq:val_function}.

\subsection{Computing the Value Function}
Several approximation schemes have been proposed for solving \eqref{eq:HJ_mr} and similar HJ equations on a fixed grid $G$ \cite{Bardi1999, Falcone1994, Mitchell2005,Osher2003,Sethian1996}. Here we will use a semi-Lagrangian approximation based on a discrete time dynamic programming (DP) principle:
\begin{subequations}
\begin{align}
&V_{\Delta t}^{k+1}(x) = \min\{l(x),  \underset{u\in\U}{\max}\text{ }\underset{ d\in\D}{\min} V^{k}_{\Delta t}(x+\Delta t f(x,u,d))\},\\
&V_{\Delta t}^{0}(x) = l(x),\\
&V_{\Delta t} = \lim_{k\rightarrow \infty} V^{k}_{\Delta t},
\end{align}
\end{subequations}%
\noindent where $V_{\Delta t}(x)$ converges to $V(x)$ as the discrete time step $\Delta t \rightarrow 0$. With the semi-Lagrangian approximation, the value function is solved on a discrete grid. Representing the approximation in vectorized form, $\vec{V} \in \RR^{n_G}$, the semi-Lagrangian approach yields \ 
\begin{subequations} \label{eq:dp_min_dist}
\begin{align}
&\vec{V}_{i}^{0} = l(x_i),\\
&\vec{V}_{i}^{k+1} = \min\{l(x_i),  \underset{u\in\U}{\max}\text{ }\underset{ d\in\D}{\min} I[\vec{V}^{k}](x_i+\Delta t f(x_i,u,d)) \label{eq:dp_min_dist_b}\},\\
&\vec{V} = \lim_{k\rightarrow \infty} \vec{V}^{k},
\end{align}
\end{subequations}%
\noindent for  $i=1, ..., n_G$, where $\{x_i\}_{i=1}^{n_G}$ are the grid nodes, $n_G$ is the number of grid nodes, $\vec{V}_i^k$ is the approximate value for $V(x_i, k \Delta t)$ and ${I[\vec{A}]:\RR^n \rightarrow \RR}$ represents an interpolation operator defining, for every point $x$, the polynomial reconstruction based on the values $\vec{A}$. Unless stated otherwise $G$ is taken to be a regular equidistant array of points with mesh spacing $\Delta x_j$ along the $j$th axis, $j=1,...,n$.\footnote{In the most general case the control and disturbance sets are also discretized, $\U=\{u_i\}_{i=1}^{n_\U}$ and $\D=\{d_i\}_{i=1}^{n_\D}$, and the minimax game is approximated.} We use a multilinear interpolator for the interpolation scheme, thus the interpolation function $I[\vec{A}](\cdot)$ is given by a convex combination over the elements of $\vec{A}$,
\begin{equation}
I[\vec{A}](x)= \phi(x)\cdot \vec{A},
\end{equation}%
\noindent where ${\phi(\cdot): \RR^n \rightarrow \RR^{n_G}}$, and $\forall x \in \R^n$ the elements of $\phi(x)$ are nonnegative and sum to 1. 

For conciseness we introduce \emph{control policies} ${\pi(\cdot): \RR^n \rightarrow \U}$ and \emph{disturbance policies} ${\rho(\cdot): \RR^n \rightarrow \D}$, which map from state to control and state to disturbance, respectively. Note that the individual minimax games being played at each grid point can be collectively thought of as a game over policies. We also introduce the \emph{backup operator} ${B[\cdot]: \RR^{n_G} \rightarrow \RR^{n_G}}$, which maps vectorized value functions onto themselves. Define the backup operator as
\begin{equation} \label{eq: op_min_dist}
 B[\vec{A}] := \min\{\vec{l},  \underset{\pi}{\max}\text{ }\underset{ \rho}{\min} \Phi_{\pi, \rho} \vec{A}\},
\end{equation}%
\noindent where ${\vec{l} \in \RR^{n_G}}$ with ${\vec{l}_i = l(x_i)}$ for ${i=1, ..., n_G}$, ${\Phi_{\pi, \rho} \in \RR^{n_G \times n_G}}$ is a policy-dependent stochastic matrix\footnote{$\Phi_{\pi, \rho}$ is effectively the probability transition matrix for a Markov Decision Process (MDP) over a finite state space given by the grid nodes ${\{x_i\}_{i=1}^{n_G}}$.}, and row $i$ of $\Phi_{\pi, \rho}$ is $\phi(x_i+\Delta tf(x, \pi(x_i), \rho(x_i)))$. For the one player setting this matrix becomes $\Phi_{\pi}$. We can now express \eqref{eq:dp_min_dist} more concisely as
\begin{subequations}\label{eq:val_iter}
\begin{align}
&\vec{V}^{0} = \vec{l},\\
&\vec{V}_{i}^{k+1} = B[V^k],\\
&\vec{V} = \lim_{k\rightarrow \infty} \vec{V}^{k}.
\end{align}
\end{subequations}

This recursive procedure \eqref{eq:val_iter} is referred to as \emph{value iteration}, and here $\vec{V}$ is the \emph{vectorized value function}, which is used to approximate the value function $V(x)$ as $I[\vec{V}](x)$. 

The value iteration algorithm in \eqref{eq:val_iter} can be used to solve other optimal control problems, albeit with a different backup operator and initialization $\vec{V}^{0}$. As other optimal control problems are presented we redefine the backup operator and specify the initialization required for the value iteration procedure.

\subsection{Contraction Mappings and Sum of Discounted Rewards}

Value iteration converges to a unique solution, independent of the initialization, when the backup operator is a \emph{contraction mapping}.  
\begin{definition} A mapping $M(\cdot): \RR^{n_G} \rightarrow \RR^{n_G}$, is said to be a contraction mapping in the norm $|| \cdot ||$ over the space $\RR^{n_G}$ if there exists a Lipschitz constant $0\leq \kappa < 1$ such that for any $\vec{A}_1, \vec{A}_2 \in \RR^{n_G}$, $||M(\vec{A}_1) - M(\vec{A}_2)|| \leq \kappa ||\vec{A}_1 - \vec{A}_2||$. 
\end{definition}

Any contraction mapping has a unique fixed point. The operator given by \eqref{eq: op_min_dist} is not a contraction mapping. To see this note that any vector $\alpha \vec{1} \in \RR^{n_G}$ is a fixed point for $\alpha < -L$.

The SDR problem does yield a contraction mapping, and we explore it briefly for the one player case. The objective is to maximize the integral (sum) of exponentially discounted rewards. In an abuse of notation, the associated value function $V(x)$ for this problem is given by
\begin{equation}\label{eq:V_sum_rewards}
V(x) := \sup_{\bu \in \UU} \int_{0}^{\infty}r\big(\bx_{x}^{\bu}(t)\big)\exp(-\lambda t) dt,  \quad \lambda >0,
\end{equation}%
\noindent where $r(\cdot):\RR^n \rightarrow \RR$ is a state-dependent reward function, and $\lambda$ is a \emph{discount rate}.

It is known that this value function is the solution to the time-independent Hamilton-Jacobi equation \cite{Bardi2008},
\begin{equation} \label{eq:HJ_sdr}
\lambda V(x) = \max_{u\in\U} \frac{\partial V}{\partial x}(x) f(x, u)+ g(x).
\end{equation}

Using the same semi-Lagrangian scheme as in the MR setting, \eqref{eq:HJ_sdr} can be approximated as
\begin{equation}
V_{\Delta t}(x) = \max_{u\in\U} \gamma V_{\Delta t}(x + \Delta t\cdot f(x, u))  + \Delta t \cdot r(x),
\end{equation}
 
\noindent where $\gamma=\exp(-\lambda \Delta t)$ is the \emph{discount factor}.

The approximation is solved for using value iteration with the following backup operator:
\begin{equation} \label{eq:backup_sdr}
B[\vec{A}] := \vec{r} +  \underset{\pi}{\max} \gamma \Phi_{\pi} \vec{A},
\end{equation}%
\noindent where $\vec{r}_i =\Delta t \cdot r(x_i)$. The initialization of the value iteration procedure is arbitrary $\vec{V}^{0} \in \RR^{n_G}$ because the backup  operator in \eqref{eq:backup_sdr} is a contraction mapping in the infinity norm, $||\cdot||_\infty$~\cite{Bertsekas1995}.

Discounting reduces the impact of future rewards on the outcome of the trajectory, and ultimately yields a contraction mapping. In fact, the discount factor $\gamma$ is the Lipschitz constant of the contraction mapping.


\section{Minimum of Discounted Rewards \label{sec:mdr}}
Inspired by the SDR setting, we now present the MDR optimal control problem. Moving forward any mention of value function $V(x)$ refers to the MR setting, and is defined by \eqref{eq:min_dist_functional}, \eqref{eq:val_function}, and \eqref{eq:HJ_mr}.

\subsection{What to discount?}

A natural proposal for the outcome of the MDR problem would be
 
\begin{equation}
\inf_{t\ge 0}l\big(\bx_{x}^{\bu,\bdelta}(t)\big)\exp(-\lambda  t).
\end{equation}

However, there is an issue with defining this outcome. Recall that discounting makes rewards contribute less to the outcome the further they occur in the future. Since we take an infimum, the discounted reward should become more positive the further it occurs in the future making it less likely to be selected by the infimum. This only works if the reward is nonpositive everywhere, which is not the case for $l$, since it is a clipped signed distance. This is easily fixed with the following outcome for the MDR problem
\begin{equation} \label{eq:mdr_functional}
\mathcal{Z}\big(x,\bu(\cdot),\bdelta(\cdot)\big) := L + \inf_{t\ge 0}(l\big(\bx_{x}^{\bu,\bdelta}(t)\big)-L)\exp(-\lambda  t) .
\end{equation}%
\noindent The quantity in the infimum is always nonpositive since $L$ upperbounds $l(x)$ by construction. Note that if ${\lambda=0}$, i.e. no discounting, then we have the minimum reward outcome \eqref{eq:min_dist_functional}. 

We define the MDR value function as
\begin{equation} \label{eq:mdr_val_function}
Z(x):=\inf_{\beta[\bu](\cdot) \in \B} \sup_{\bu \in \UU}\mathcal{Z}\big(x,\bu(\cdot),\beta[\bu](\cdot)\big) .
\end{equation}

%

%
%
\begin{for_journal}
For convenience we define two functions ${U(x):=Z(x)-L}$ and ${h(x):=l(x)-L}$. 
We will show that $U(x)$ is the viscosity solution to a particular time-independent HJ equation
\begin{equation}\label{eq:HJ_mdr}
    0 = \min\left\{h(x)-U(x), \max_{u\in\U} \min_{ d\in\D} \!\!\frac{\partial U}{\partial x}(x) f(x,u, d) - \lambda U(x)\right\}.
\end{equation}

We begin by presenting some Lemmas to facilitate the proof.
\begin{lemma}
The function $U(x)$ is well defined. 
\end{lemma}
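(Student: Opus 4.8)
The plan is to unwind the definitions and reduce the claim to the elementary fact that the infimum (resp.\ supremum) of a bounded subset of $\RR$ always exists in $\RR$. Since $L$ is a constant, adding or subtracting it commutes with every $\inf$/$\sup$, so I would first rewrite $U(x)=Z(x)-L$ as
$U(x)=\inf_{\beta[\bu](\cdot)\in\B}\sup_{\bu\in\UU}\inf_{t\ge0} h\big(\bx_{x}^{\bu,\beta[\bu]}(t)\big)\exp(-\lambda t)$,
where $h(x)=l(x)-L$. The task then is to verify that each of the three nested extremizations yields a finite real number.

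The key ingredient is boundedness. First, by the existence--uniqueness result cited in the system-model section, for every fixed $\bu\in\UU$ and every nonanticipative strategy $\beta\in\B$ the trajectory $\bx_{x}^{\bu,\beta[\bu]}(\cdot)$ is a well-defined, unique, continuous function on $[0,\infty)$; composing with the Lipschitz map $h$ gives a continuous map $t\mapsto h\big(\bx_{x}^{\bu,\beta[\bu]}(t)\big)$. Second, the clipping definition of $l$ forces $l(x)\in[-L,L]$, hence $h(x)\in[-2L,0]$ for all $x$; combined with $\exp(-\lambda t)\in(0,1]$ for $t\ge0$ and $\lambda\ge0$, the discounted reward $h\big(\bx_{x}^{\bu,\beta[\bu]}(t)\big)\exp(-\lambda t)$ lies in $[-2L,0]$ for every admissible signal pair and every $t$.

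With this bound in hand I would argue outward through the three operations. The innermost $\inf_{t\ge0}$ is an infimum of a set contained in $[-2L,0]$, so it exists and again lies in $[-2L,0]$; note that attainment is not needed, only finiteness, which follows from the order-completeness of $\RR$. Consequently the functional $(\bu,\beta)\mapsto\inf_{t\ge0} h(\cdot)\exp(-\lambda t)$ is uniformly bounded in $[-2L,0]$, so $\sup_{\bu\in\UU}$ exists in $[-2L,0]$ for each fixed $\beta$, and finally the outer $\inf_{\beta\in\B}$ exists in $[-2L,0]$. Hence $U(x)$ is a well-defined real number (indeed $U(x)\in[-2L,0]$) for every $x\in\RR^n$.

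I do not expect a genuine obstacle here: this is a well-definedness statement, and the only thing to be careful about is not claiming more than needed. In particular I would avoid asserting that any of the extrema are attained, since well-definedness only requires existence of the $\sup$/$\inf$ as real numbers, which is guaranteed by boundedness plus completeness of $\RR$. The one point worth stating explicitly is the reliance on the system-model hypotheses (Lipschitz and bounded $f$) to guarantee that the trajectory --- and therefore the quantity being optimized --- is well-defined for every admissible signal pair \emph{before} any optimization is performed.
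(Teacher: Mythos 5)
Your proof is correct, but it takes a different route from the paper's. You argue directly: since $l$ is clipped to $[-L,L]$, the integrand $h\big(\bx_{x}^{\bu,\beta[\bu]}(t)\big)\exp(-\lambda t)$ lies in $[-2L,0]$ for every admissible pair and every $t\ge 0$, so each of the three nested extremizations exists in $[-2L,0]$ by order-completeness of $\RR$. The paper instead introduces the truncated-horizon quantities $U(x,k)=\inf_{\beta}\sup_{\bu}\inf_{t\in[0,k\Delta t]}h(\cdot)\exp(-\lambda t)$, observes that this sequence is nonincreasing in $k$ and bounded below by $-2L$, concludes it converges, and asserts that the limit equals $U(x)$. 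Your argument is the more elementary and self-contained of the two: well-definedness needs only boundedness plus completeness, and you correctly resist claiming attainment of any extremum. The paper's monotone-sequence construction buys something extra --- it exhibits $U(x)$ as the limit of finite-horizon approximations, which is conceptually aligned with the value-iteration scheme used later --- but as a proof of mere well-definedness it is more roundabout, and its final step (``clearly in the limit this sequence also equals $U(x)$'') quietly interchanges a limit with an $\inf$-$\sup$-$\inf$, which is exactly the kind of step your bounded-set argument avoids having to justify. No gap in your proposal.
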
 
\begin{proof}
Define the sequence $\{U(x,k)\}_k$, where
\begin{equation}
U(x,k) = \inf_{\beta[\bu](\cdot) \in \B} \sup_{\bu \in \UU} \inf_{t \in [0,k\Delta t]}(h\big(\bx_{x}^{\bu,\beta[\bu]}(t)\big))\exp(-\lambda  t),
\end{equation}
and $\Delta t>0$. The sequence is nonincreasing, since $U(x,k+1) \leq U(x,k)$, and is lower bounded by $-2L$, so it converges. Clearly in the limit this sequence also equals $U(x)$.
\end{proof}

\begin{lemma} \label{dpp}
\emph{Dynamic programming principle.} For $\delta>0$,
\begin{equation}
\begin{split} 
U(x)= \inf_{\beta[\bu](\cdot) \in \B} \sup_{\bu \in \UU_\delta} 
\big [
\min\{&\inf_{t \in [0, \delta]} h\big(\bx_{x}^{\bu,\beta[\bu]}(t)\big))\exp(-\lambda  t), 
\\&\exp(-\lambda \delta)U(\bx_{x}^{\bu,\beta[\bu]}(\delta))\}
\big ]
\end{split},
\end{equation}
where $\UU_{\delta}$ consists of measurable functions on the interval
$[0,\delta]$.
\end{lemma}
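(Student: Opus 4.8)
The plan is to prove the two inequalities $U(x)\le W(x)$ and $U(x)\ge W(x)$ separately, where $W(x)$ denotes the right-hand side of the claimed identity, and to conclude by letting an auxiliary parameter $\epsilon\to 0$. Write $J(x,\bu,\beta):=\inf_{t\ge0}h(\bx_x^{\bu,\beta[\bu]}(t))\exp(-\lambda t)$, so that $U(x)=\inf_{\beta\in\B}\sup_{\bu\in\UU}J(x,\bu,\beta)$. The engine of both inequalities is the \emph{semigroup (concatenation) property} of the controlled trajectories together with the splitting of the infimum over $[0,\infty)$ at time $\delta$. Concretely, for fixed $\bu\in\UU$ and $\beta\in\B$, setting $y:=\bx_x^{\bu,\beta[\bu]}(\delta)$, I would first record the pointwise decomposition
\[
J(x,\bu,\beta)=\min\Big\{\inf_{t\in[0,\delta]}h(\bx_x^{\bu,\beta[\bu]}(t))\exp(-\lambda t),\; \exp(-\lambda\delta)\inf_{s\ge0}h(\bx_x^{\bu,\beta[\bu]}(s+\delta))\exp(-\lambda s)\Big\},
\]
and then invoke uniqueness of solutions to \eqref{eq:xdot} to identify $s\mapsto\bx_x^{\bu,\beta[\bu]}(s+\delta)$ with the trajectory $\bx_y^{\bu^\delta,\beta^\delta[\bu^\delta]}(\cdot)$ issuing from $y$ under the shifted control $\bu^\delta(\cdot)=\bu(\cdot+\delta)$ and the induced tail strategy $\beta^\delta$. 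This turns the second bracketed term into exactly an MDR outcome started from the intermediate state $y$.

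For the inequality $U(x)\ge W(x)$ (the easy direction), I would fix $\epsilon>0$ and pick $\beta^*\in\B$ that is $\epsilon$-optimal for $U(x)$, and use this same $\beta^*$ in the infimum defining $W(x)$. For any $\bu_1\in\UU_\delta$, consider full controls extending $\bu_1$ on $[0,\delta]$; the reached state $y$ depends only on $\bu_1$ by nonanticipativity, and the tail of $\beta^*$ induces a nonanticipative strategy for the shifted game at $y$, whence $\sup$ over tail controls of the shifted outcome is $\ge U(y)$. Since the term $\inf_{t\in[0,\delta]}h(\cdot)\exp(-\lambda t)$ does not depend on the tail control, the supremum over tail controls commutes with the outer $\min$, giving $\min\{\inf_{[0,\delta]}h\exp(-\lambda\cdot),\exp(-\lambda\delta)U(y)\}\le \sup_\bu J(x,\bu,\beta^*)\le U(x)+\epsilon$. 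Taking $\sup_{\bu_1\in\UU_\delta}$ and then using that $\beta^*$ is admissible in $W$ yields $W(x)\le U(x)+\epsilon$.

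The reverse inequality $U(x)\le W(x)$ is where the real work lies. Fixing $\epsilon>0$ and an arbitrary $\beta_1\in\B$, I must \emph{synthesize} a full strategy $\beta\in\B$ by concatenation: play $\beta_1$ on $[0,\delta]$, and on $[\delta,\infty)$ switch to an $\epsilon$-optimal strategy $\beta_2^{\,y}$ for $U(y)$, where $y=\bx_x^{\bu,\beta_1[\bu]}(\delta)$ is the (control-dependent) state reached at $\delta$. Applying the decomposition above and the bound $U(y)\ge \exp(-\lambda\delta)^{-1}\cdot(\text{tail outcome})-\epsilon$, together with the elementary inequality $\min\{a,b+\epsilon\}\le\min\{a,b\}+\epsilon$ (so that the factor $\exp(-\lambda\delta)\le1$ absorbs the $\epsilon$ cleanly), gives $\sup_\bu J(x,\bu,\beta)\le \sup_{\bu\in\UU_\delta}\min\{\inf_{[0,\delta]}h\exp(-\lambda\cdot),\exp(-\lambda\delta)U(y)\}+\epsilon$. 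Taking the infimum over $\beta_1$ then delivers $U(x)\le W(x)+\epsilon$.

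The main obstacle is making this concatenated strategy rigorous: I need a (measurable) selection $y\mapsto\beta_2^{\,y}$ of $\epsilon$-optimal tail strategies and must verify that the glued map $\beta$ is genuinely nonanticipative, i.e.\ that its output on $[0,t]$ depends only on the control on $[0,t]$, so that the switch at time $\delta$ does not create anticipation. This is the standard subtlety in Elliott--Kalton differential-game dynamic programming. I would handle it by replacing the continuum of possible endpoints with an $\epsilon$-net of controls on $[0,\delta]$, so that only finitely many tail strategies $\beta_2^{\,y}$ need to be glued, verifying nonanticipativity of the finite concatenation directly, and then passing to the limit; Lipschitz continuity and boundedness of $f$ (hence regularity of $U$) keep all approximation errors uniformly controlled. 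Combining the two inequalities and sending $\epsilon\to0$ gives $U(x)=W(x)$, which is the claimed dynamic programming principle.
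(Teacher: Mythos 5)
Your proposal is correct and rests on the same decomposition as the paper's proof: split the infimum defining the outcome at time $\delta$ and use time-invariance of the dynamics to recognize the tail term as a discounted MDR outcome started from $y=\bx_{x}^{\bu,\beta[\bu]}(\delta)$. Where you genuinely diverge is in how the tail inf-sup is replaced by $U(y)$: the paper simply asserts that the game over the second interval ``can be optimized independently of the first once $y$ is specified,'' whereas you actually prove this interchange via the two inequalities $U\le W$ and $U\ge W$, using restriction of an $\epsilon$-optimal strategy for the easy direction and concatenation of tail strategies (with a finite net of endpoints to obtain a nonanticipative glued strategy) for the hard one. Your version is therefore strictly more rigorous; the paper's one-sentence justification is precisely the step that is not obvious in the Elliott--Kalton framework, and your argument supplies the missing Evans--Souganidis-style construction, at the cost of considerably more machinery. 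One small quibble: an ``$\epsilon$-net of controls on $[0,\delta]$'' is not quite the right object, since the space of controls need not be totally bounded; the net should be taken over the set of reachable endpoints $y$ (which is bounded because $f$ is bounded), with uniform continuity of $U$ controlling the error on each cell, which is in effect what your last paragraph describes.
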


\begin{proof}
Splitting the time interval of the infimum in \eqref{eq:mdr_functional} into $[0,\delta]$ and $t>\delta$, $U(x)$ can be expressed as
\begin{equation}
\begin{split}{}{}{}
U(x) =
\inf_{\beta[\bu](\cdot) \in \B} \sup_{\bu \in \UU} 
\big [\min\{&\inf_{t \in [0, \delta]} h\big(\bx_{x}^{\bu,\beta[\bu]}(t)\big))\exp(-\lambda  t),\\ &\inf_{t > \delta} h\big(\bx_{x}^{\bu,\beta[\bu]}(t)\big))\exp(-\lambda  t)\}
\big ]
\end{split}.
\end{equation}
Due to time-invariance of the dynamics, if we define $s=t-\delta$, $u_\delta(s)=u(t+\delta)$ and $y=\bx_{x}^{\bu,\beta[\bu]}(\delta)$,
\begin{equation}
\begin{split}
U(x) = 
\inf_{\beta[\bu](\cdot) \in \B} \sup_{\bu \in \UU} 
\big [\min\{&\inf_{t \in [0, \delta]} h\big(\bx_{x}^{\bu,\beta[\bu]}(t)\big))\exp(-\lambda  t), \\ &\exp(-\lambda \delta) \inf_{s > 0} h\big(\bx_{y}^{\bu_\delta,\beta[\bu_\delta]}(s)\big))\exp(-\lambda s)\}
\big ]
\end{split}.
\end{equation}
The game over the second interval can be optimized independently of the first interval once $y=\bx_{x}^{\bu,\bdelta}(\delta)$ is specified thus it can be replaced with $\exp(-\lambda \delta)U(\bx_{x}^{\bu,\beta[\bu]}(\delta))$. Furthermore $\UU$ is replaced with $\UU_\delta$ since the game is only played explicitly on $[0,\delta]$.
\end{proof}

Before presenting the proof of the viscosity solution, it will be necessary to introduce the concepts of \emph{viscosity subsolution} and \emph{viscosity supersolution}. To simplify notation, we first define the \emph{Hamiltonian} $H:\RR^n \times \RR^n \rightarrow \RR$,
\begin{equation}\label{eq:hamiltonian}
H(x,p) = \max_{u\in\U} \min_{ d\in\D} \!\! f(x,u,d)\cdot p.
\end{equation}

\begin{definition} \label{def:sub_sol}
A function $\phi$ (in this case $U$) on $\RR^n$  is a viscosity subsolution of \eqref{eq:HJ_mdr}, if for all $\psi \in C^1(\RR^n)$ and $x_0$ such that $\phi(x_0) = \psi(x_0)$ and $x_0$ attains a local maximum on $\phi- \psi$, then
\begin{equation}\label{eq:sub_sol}
    \min\left\{h(x_0)-\psi(x_0), H(x_0,\frac{\partial \psi}{\partial x}) - \lambda \psi(x_0)\right\} \geq 0.
\end{equation}
\end{definition}

\begin{definition} \label{def:sup_sol} 
A function $\phi$ (in this case $U$) on $\RR^n$  is a supersolution of \eqref{eq:HJ_mdr}, if for all $\psi \in C^1(\RR^n)$ and $x_0$ such that $\phi(x_0) = \psi(x_0)$ and $x_0$ attains a local minimum on $\phi- \psi$, then 
\begin{equation}\label{eq:sup_sol}
    \min\left\{h(x_0)-\psi(x_0), H(x_0,\frac{\partial \psi}{\partial x}) - \lambda \psi(x_0)\right\} \leq 0.
\end{equation}
\end{definition}

Now we present the major theoretical result for this section.

\begin{theorem}
The function $U(x)$ is the unique viscosity solution to the time-independent HJ equation given by \eqref{eq:HJ_mdr}.

\end{theorem}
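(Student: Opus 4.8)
The plan is to establish the theorem in three stages: (i) $U$ is a viscosity subsolution of \eqref{eq:HJ_mdr}; (ii) $U$ is a viscosity supersolution; and (iii) a comparison principle forces uniqueness, so that $U$ is \emph{the} solution. The dynamic programming principle of Lemma~\ref{dpp} drives stages (i) and (ii), and the discount $\lambda>0$ is the structural feature that makes stage (iii) work.

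For the subsolution property, fix $\psi\in C^1(\RR^n)$ and a point $x_0$ at which $U-\psi$ attains a local maximum with $U(x_0)=\psi(x_0)$; I must show both arguments of the minimum in \eqref{eq:sub_sol} are nonnegative. The first, $h(x_0)-\psi(x_0)=h(x_0)-U(x_0)$, is nonnegative because $U(x)\le h(x)$ for all $x$: taking $t=0$ in the infimum of \eqref{eq:mdr_functional} shows the instantaneous value $h(x)$ is always admissible, independently of the inputs, so that $\inf_{\beta\in\B}\sup_{\bu}(\cdots)\le h(x)$. For the Hamiltonian term I invoke Lemma~\ref{dpp} together with the elementary bound $\min\{a,b\}\le b$ on its inner objective to get $U(x_0)\le\inf_{\beta\in\B}\sup_{\bu}e^{-\lambda\delta}U(\bx_{x_0}^{\bu,\beta[\bu]}(\delta))$ for every $\delta>0$. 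Substituting $U\le\psi$ (valid in a neighborhood, hence along the trajectory for small $\delta$) and $U(x_0)=\psi(x_0)$, then expanding $\psi(\bx(\delta))=\psi(x_0)+\delta\,\frac{\partial\psi}{\partial x}(x_0)\cdot f+o(\delta)$ and $e^{-\lambda\delta}=1-\lambda\delta+o(\delta)$, dividing by $\delta$ and letting $\delta\to0$ yields $H(x_0,\frac{\partial\psi}{\partial x})-\lambda\psi(x_0)\ge0$.

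The supersolution property is the harder direction. Fix $\psi\in C^1(\RR^n)$ and $x_0$ a local minimum of $U-\psi$ with $U(x_0)=\psi(x_0)$; now only \emph{one} argument of the minimum in \eqref{eq:sup_sol} need be nonpositive. If $U(x_0)=h(x_0)$ the first argument vanishes and we are done, so assume $U(x_0)<h(x_0)$. By continuity, for small $\delta$ the obstacle term $\inf_{t\in[0,\delta]}h(\bx(t))e^{-\lambda t}$ strictly exceeds $U(x_0)$ along near-optimal trajectories, so the inner minimum in Lemma~\ref{dpp} is governed by its second argument, giving $U(x_0)\ge\inf_{\beta\in\B}\sup_{\bu}e^{-\lambda\delta}U(\bx(\delta))$ up to $o(\delta)$; using $U\ge\psi$ and reversing the expansion above produces $H(x_0,\frac{\partial\psi}{\partial x})-\lambda\psi(x_0)\le0$. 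I expect this to be the main obstacle: making the game rigorous requires selecting a near-optimal nonanticipative strategy $\beta_\varepsilon\in\B$, responding with suitable (possibly $\delta$-dependent) controls, and proving that the finite-horizon min--max values $\inf_{\beta\in\B}\sup_{\bu}\frac{1}{\delta}[\cdots]$ converge to the instantaneous $\max_{u\in\U}\min_{d\in\D}\frac{\partial\psi}{\partial x}\cdot f-\lambda\psi(x_0)$. This Isaacs/Elliott--Kalton limit, and the bookkeeping of the disturbance's informational advantage, is where the technical weight lies; I would adapt the machinery underlying \cite{Fisac2015,Barron1989,Barron1990}.

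Finally, uniqueness follows from a comparison principle: any bounded subsolution $U_1$ and bounded supersolution $U_2$ satisfy $U_1\le U_2$. I would run the standard doubling-of-variables argument with penalty $\frac{1}{2\varepsilon}|x-y|^2$; Lipschitz continuity of $f$ bounds the Hamiltonian difference, while the discount contributes a strictly positive term $\lambda(U_1-U_2)$ at a hypothetical positive maximum of $U_1-U_2$, producing the contradiction — this is exactly the role of $\lambda>0$, mirroring how discounting turns the backup operator \eqref{eq:backup_sdr} into a contraction. The obstacle is accommodated by the usual split on which argument of the minimum is active at the optimizing point. Taking $U_1=U_2=U$ then shows $U$ is the unique viscosity solution of \eqref{eq:HJ_mdr}.
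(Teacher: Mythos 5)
Your proposal is correct and follows essentially the same route as the paper: both derive the subsolution and supersolution properties from the dynamic programming principle of Lemma~\ref{dpp} via a $C^1$ test function and a small-time analysis of $e^{-\lambda\delta}\psi(\bx(\delta))$, and both defer the remaining technical weight (near-optimal strategy selection and the comparison theorem) to standard viscosity-solution machinery. The only cosmetic differences are that you argue directly where the paper argues by contradiction with explicit $\epsilon_1,\epsilon_2$, and that you sketch the doubling-of-variables comparison argument where the paper simply cites the classical uniqueness result of \cite{Barron1989}.
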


\begin{proof}
The structure of the proof follows the classical approach in \cite{Evans1984}, analogously to \cite{Fisac2015}, and draws from viscosity solution theory. We start by assuming that $U$ is not a viscosity solution and then derive a contradiction to Lemma \ref{dpp}. 

A continuous function is a viscosity solution if it is both a \emph{subsolution} and \emph{supersolution}. Note that $U$ is uniformly continuous due to the continuity assumptions on $f$ and $l$. We first show $U$ is a subsolution of \eqref{eq:HJ_mdr}.

From the local maximum condition in Definition \ref{def:sub_sol} and continuity of system trajectories, there exists a sufficiently small $\delta>0$, such that for $\tau \in [0, \delta]$%
\begin{equation*}
U(\bx_{x_0}^{\bu, \bdelta}(\tau)) \leq \psi(\bx_{x_0}^{\bu, \bdelta}(\tau))
\end{equation*}
for all $u(\cdot) \in \UU$ and $d(\cdot) \in \DD$.

For sake of contradiction, assume \eqref{eq:sub_sol} is false, then one of the following must be true
\begin{subequations}
\begin{align}
&h(x_0) = \psi(x_0) - \epsilon_1 \label{eq:sub_contra_a}\\
&H(x_0,\frac{\partial \psi}{\partial x}) - \lambda \psi(x_0) = -\epsilon_2 \label{eq:sub_contra_b},
\end{align} 
\end{subequations}
for some $\epsilon_1, \epsilon_2 > 0$. If \eqref{eq:sub_contra_a} is true, then 
\begin{equation}
h(\bx_{x_0}^{\bu, \bdelta}(\tau))\exp(-\lambda \tau) \leq \psi(x_0) - \frac{\epsilon_1}{2} = U(x_0) - \frac{\epsilon_1}{2}
\end{equation}
Incorporating this into the dynamic programming principle (Lemma \ref{dpp}), we have 
\begin{equation}
\begin{split}
U(x_0) &\leq \inf_{\beta[\bu](\cdot) \in \B} \sup_{\bu \in \UU_\delta}
\big \{\inf_{t \in [0, \delta]} h\big(\bx_{x}^{\bu,\beta[\bu]}(t)\big))\exp(-\lambda  t) \big \} \\&\leq U(x_0) - \frac{\epsilon_1}{2},
\end{split}
\end{equation}
which is a contradiction since $\epsilon_1>0$. Similarly, if \eqref{eq:sub_contra_b}, then for a small enough $\delta>0$ and some nonanticipative strategy $\beta[\cdot]$, 
\begin{equation}
H(\bx_{x_0}^{\bu,\beta[\bu]}(\tau), \frac{\partial \psi}{\partial x}) - \lambda \psi(\bx_{x_0}^{\bu,\beta[\bu]}(\tau)) \leq \frac{\epsilon_2}{2}
\end{equation}
for all $\tau \in [0,\delta]$ and all inputs $\bu(\cdot) \in \UU$. Due to the $\max{}$ in \eqref{eq:hamiltonian}, for $\tau \in [0,\delta]$
\begin{equation}
\begin{split}
&f(\bx_{x_0}^{\bu,\beta[\bu]}(\tau),\bu(\tau), \beta[\bu](\tau)) \cdot \frac{\partial \psi}{\partial x} - \lambda \psi(\bx_{x_0}^{\bu,\beta[\bu]}(\tau)) \leq \\ &H(\bx_{x_0}^{\bu,\beta[\bu]}(\tau), \frac{\partial \psi}{\partial x}) - \lambda \psi(\bx_{x_0}^{\bu,\beta[\bu]}(\tau)).
\end{split}
\end{equation}
Combining the two previous inequalities and integrating over the interval $[0,\delta]$ we have
\begin{equation}
\exp(-\lambda \delta)\psi(\bx_{x_0}^{\bu,\beta[\bu]}(\delta))-\psi(x_0) \leq \frac{\epsilon_2}{2} \delta
\end{equation}
Recalling that $U(x_0)=\psi(x_0)$
\begin{equation}
\exp(-\lambda \delta)U(\bx_{x_0}^{\bu,\beta[\bu]}(\delta)) \leq \frac{\epsilon_2}{2} \delta + U(x_0)
\end{equation}
Incorporating this into the dynamic programming principle, we have
\begin{equation}
U(x_0) \leq \exp(-\lambda \delta)U(\bx_{x_0}^{\bu,\beta[\bu]}(\delta)) \leq \frac{\epsilon_2}{2} \delta + U(x_0),
\end{equation}
which is a contradiction, thus we conclude that $U$ is a subsolution.

Next we show that $U$ is a supersolution. From the local minimum condition in Definition \ref{def:sup_sol} and continuity of system trajectories, there exists a sufficiently small $\delta>0$, such that for $\tau \in [0, \delta]$
\begin{equation*}
U(\bx_{x_0}^{\bu, \bdelta}(\tau)) \geq \psi(\bx_{x_0}^{\bu, \bdelta}(\tau))
\end{equation*}
for all $u(\cdot) \in \UU$ and $d(\cdot) \in \DD$.

If we suppose \eqref{eq:sup_sol} is false, then both of the following must hold 
\begin{subequations}
\begin{align}
&h(x_0) = \psi(x_0) + \epsilon_1 \label{eq:sup_contra_a}\\
&H(x_0,\frac{\partial \psi}{\partial x}) - \lambda \psi(x_0) = \epsilon_2 \label{eq:sup_contra_b},
\end{align} 
\end{subequations}
for some small $\epsilon_1, \epsilon_2 > 0$. If \eqref{eq:sup_contra_a} is true, then 
\begin{equation}
\begin{split}
h(\bx_{x_0}^{\bu, \bdelta}(\tau))\exp(-\lambda \tau) &\geq \psi(x_0) + \frac{\epsilon_1}{2} \\ &= U(x_0) + \frac{\epsilon_1}{2}
\end{split}
\end{equation}
Similarly, if \eqref{eq:sub_contra_b}, then for small enough $\delta>0$ and  some input $\bu(\cdot) \in \UU$ 
\begin{equation}
H(\bx_{x_0}^{\bu,\beta[\bu]}(\tau), \frac{\partial \psi}{\partial x}) - \lambda \psi(\bx_{x_0}^{\bu,\beta[\bu]}(\tau)) \geq \frac{\epsilon_2}{2}
\end{equation}
for all $\tau \in [0,\delta]$ and all nonanticipative strategies $\beta[\cdot]$. 

Due to the $\min{}$ in \eqref{eq:hamiltonian}, for $\tau \in [0,\delta]$
\begin{equation}
\begin{split}
&f(\bx_{x_0}^{\bu,\beta[\bu]}(\tau),\bu(\tau), \beta[\bu](\tau)) \cdot \frac{\partial \psi}{\partial x} - \lambda \psi(\bx_{x_0}^{\bu,\beta[\bu]}(\tau)) \geq \\ &H(\bx_{x_0}^{\bu,\beta[\bu]}(\tau), \frac{\partial \psi}{\partial x}) - \lambda \psi(\bx_{x_0}^{\bu,\beta[\bu]}(\tau)).
\end{split}
\end{equation}
Combining the two previous inequalities and integrating over the interval $[0,\delta]$ we have
\begin{equation}
\exp(-\lambda \delta)\psi(\bx_{x_0}^{\bu,\beta[\bu]}(\delta))-\psi(x_0) \geq \frac{\epsilon_2}{2} \delta
\end{equation}
Recalling that $U(x_0)=\psi(x_0)$
\begin{equation}
\exp(-\lambda \delta)U(\bx_{x_0}^{\bu,\beta[\bu]}(\delta)) \geq \frac{\epsilon_2}{2} \delta + U(x_0).
\end{equation}
Incorporating this into the dynamic programming principle, we have
%

\begin{equation} 
\begin{split}
U(x) = 
\inf_{\beta[\bu](\cdot) \in \B} \sup_{\bu \in \UU_\delta} 
\big [\min\{ &\inf_{t \in [0, \delta]} h\big(\bx_{x}^{\bu,\beta[\bu]}(t)\big))\exp(-\lambda  t), \\&\exp(-\lambda \delta)U(\bx_{x}^{\bu,\beta[\bu]}(\delta))\}
\big ]\geq
U(x) + \min\{\frac{\epsilon_1}{2}, \frac{\epsilon_2}{2} \delta \},
\end{split}
\end{equation}
which is a contradiction, thus $U$ is also a supersolution.

Since we have shown that $U$ is both a viscosity subsolution and viscosity supersolution of the HJ equation, this completes the proof that $U$ is a viscosity solution of \eqref{eq:HJ_mdr}. Uniqueness follows from the classical comparison and uniqueness theorems for viscosity solutions (see Theorem 4.2 in \cite{Barron1989}).
\end{proof}
\end{for_journal}

\subsection{Computing the Discounted Value Function}
The discrete approximation of \eqref{eq:HJ_mdr} is given by
\begin{equation}\label{eq:U_approx}
    U_{\Delta t} (x) = \min\left\{h(x), \max_{u\in\U} \min_{ d\in\D}  \gamma U_{\Delta t}(x+\Delta tf(x,u,d))\right\} ,
\end{equation}%
\noindent which can be solved on a grid $G$ via value iteration 
\begin{subequations} \label{eq:val_iter_backup}
\begin{align}
&\vec{U}^{0} \in \RR^{n_G} ,\\
&\vec{U}^{k+1} = B[U^k] ,\\
&\vec{U} = \lim_{k\rightarrow \infty} \vec{U}^{k} ,
\end{align}
\end{subequations}%
\noindent with the backup operator defined as
\begin{equation} \label{eq:backup_mdr}
B[\vec{A}] := \min\left\{ \vec{h}, \underset{\pi}{\max}\text{ }\underset{ \rho}{\min} \gamma \Phi_{\pi, \rho} \vec{A} \right \}
,
\end{equation}%
\noindent where $\vec{h}_i = h(x_i)$. The MDR value function $Z(x)$ is then approximated by $I[\vec{U}](x)+L$, where again $I[\vec{U}](\cdot)$ is the interpolation operator. We now prove that \eqref{eq:backup_mdr} is a contraction.

\begin{lemma}\label{lem:maxmin} For any two functions $q, g: A \times B \rightarrow \RR$,
\begin{equation}
|\max_a \min_b q(a,b) -\max_a \min_b g(a,b)| \leq \max_a \max_b |q(a,b) - g(a,b)|
.
\end{equation}
\end{lemma}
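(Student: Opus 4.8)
The plan is to reduce this nested $\max$--$\min$ inequality to the elementary fact that both $\max$ and $\min$ are \emph{nonexpansive} in the supremum norm, and then to compose the two applications. First I would isolate the following sublemma: for any two real-valued functions $p,r$ on a common index set, $|\max_a p(a) - \max_a r(a)| \le \max_a |p(a) - r(a)|$. To prove it, fix any $a$ and write $p(a) \le r(a) + |p(a)-r(a)| \le \max_{a'} r(a') + \max_{a'} |p(a')-r(a')|$. Taking the maximum over $a$ on the left gives $\max_a p(a) - \max_a r(a) \le \max_a |p(a)-r(a)|$; swapping the roles of $p$ and $r$ yields the reverse inequality, hence the two-sided bound. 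The same statement holds with $\min$ in place of $\max$, either by repeating the argument verbatim or via the identity $\min_a p(a) = -\max_a (-p(a))$.

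Next I would apply the sublemma twice. For the inner minimization, fix $a$ and set $p(b)=q(a,b)$, $r(b)=g(a,b)$ to obtain
\[
\Big| \min_b q(a,b) - \min_b g(a,b) \Big| \le \max_b |q(a,b) - g(a,b)|.
\]
Defining $F(a):=\min_b q(a,b)$ and $G(a):=\min_b g(a,b)$ and applying the $\max$ version of the sublemma to $F$ and $G$ gives $|\max_a F(a) - \max_a G(a)| \le \max_a |F(a)-G(a)|$. Chaining the two bounds,
\[
\Big| \max_a \min_b q(a,b) - \max_a \min_b g(a,b)\Big| \le \max_a |F(a)-G(a)| \le \max_a \max_b |q(a,b)-g(a,b)|,
\]
which is exactly the claimed inequality.

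I do not expect a genuine obstacle here: the result is routine once the nonexpansiveness sublemma is extracted. The only points deserving care are the two-sided (absolute-value) nature of the bound, handled by the symmetric swap of $p$ and $r$, and the fact that the inner estimate is uniform in $a$, so that applying the outer $\max$ preserves it. If one prefers not to assume the existence of maximizers or minimizers, the ``fix $a$, bound pointwise, then take the supremum'' phrasing above carries over unchanged with $\sup$ and $\inf$ replacing $\max$ and $\min$.
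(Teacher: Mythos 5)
Your proof is correct, but it takes a different route from the paper's. The paper argues directly at the optimizers: it defines the minimax pairs $(a_q,b_q)$ and $(a_g,b_g)$, assumes without loss of generality that $q(a_q,b_q)\ge g(a_g,b_g)$, and chains three inequalities through the intermediate quantities $\min_b g(a_q,b)$ and the point $b_{gg}=\arg\min_b g(a_q,b)$, landing on $|q(a_q,b_{gg})-g(a_q,b_{gg})|\le \max_a\max_b|q-g|$. You instead factor the statement through a reusable sublemma --- that $\max$ and $\min$ are each nonexpansive in the supremum norm --- and apply it twice, once to the inner minimization (uniformly in $a$) and once to the outer maximization. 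Your decomposition is somewhat cleaner and more modular: the one-sided bound plus symmetric swap handles the absolute value transparently, and as you note the argument survives verbatim with $\sup$ and $\inf$ when extrema are not attained, whereas the paper's argument presupposes that the optimizers exist and relies on a sign analysis (that each successive difference in its chain is nonnegative) which it leaves implicit. The paper's approach is shorter on the page and avoids naming an auxiliary lemma, but yours makes the mechanism --- two compositions of a $1$-Lipschitz operation --- more visible. Both are valid proofs of the stated inequality.
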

\begin{proof}
Define the minimax optimizers for $q$ as the pair $(a_q,b_q)$, and minimax optimizers of $g$ as the pair $(a_g, b_g)$. Without loss of generality we assume that ${q(a_q,b_q) \geq g(a_g,b_g)}$.
We then have the following inequalities:
\begin{equation*}
\begin{split}
&|\max_a \min_b q(a,b) -\max_a \min_b g(a,b)|
\leq |q(a_q,b_q) - \min_b g(a_q,b)|\\
&\leq |q(a_q,b_{gg}) - g(a_q,b_{gg})| \leq \max_a \max_b |q(a,b) - g(a,b)|
,
\end{split}
\end{equation*}
with $b_{gg} :=\displaystyle{\arg\min_b g(a_q,b)}$.
\end{proof}
\begin{theorem} 
The operator given by \eqref{eq:backup_mdr} is a contraction mapping in the infinity norm $|| \cdot ||_{\infty}$ on the space $\RR^{n_G}$.
\end{theorem}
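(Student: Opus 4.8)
The plan is to prove the stronger statement that $B$ is a $\gamma$-contraction, i.e.\ that $\|B[\vec A_1]-B[\vec A_2]\|_\infty \le \gamma\,\|\vec A_1-\vec A_2\|_\infty$ for all $\vec A_1,\vec A_2\in\RR^{n_G}$, where $\gamma=\exp(-\lambda\Delta t)\in(0,1)$. Since $\gamma$ then serves as the required Lipschitz constant $\kappa$, this establishes the claim. I would argue componentwise, fixing a grid index $i$ and bounding $|B[\vec A_1]_i-B[\vec A_2]_i|$ by a quantity independent of $i$. The whole argument rests on three elementary facts, each showing that one of the three operations composing $B$ is nonexpansive, with only the discount factor actually contracting.

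First I would record that capping by the fixed vector $\vec h$ is nonexpansive: for any constant $c$ and reals $a,b$ one has $|\min\{c,a\}-\min\{c,b\}|\le|a-b|$. Applying this with $c=h(x_i)$ reduces the problem to bounding the difference of the inner max--min terms $Q_i(\vec A):=\max_\pi\min_\rho \gamma(\Phi_{\pi,\rho}\vec A)_i$. Next I would invoke Lemma \ref{lem:maxmin} with the abstract index sets $A,B$ taken to be the controller and disturbance policy spaces and with $q(\pi,\rho)=\gamma(\Phi_{\pi,\rho}\vec A_1)_i$ and $g(\pi,\rho)=\gamma(\Phi_{\pi,\rho}\vec A_2)_i$. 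This yields $|Q_i(\vec A_1)-Q_i(\vec A_2)|\le \gamma\,\max_{\pi,\rho}\big|\big(\Phi_{\pi,\rho}(\vec A_1-\vec A_2)\big)_i\big|$, trading the stubborn max--min for a plain maximum over policies of a single averaged difference.

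Finally I would use that each $\Phi_{\pi,\rho}$ is a stochastic matrix: row $i$ has nonnegative entries summing to $1$, so for any $\vec v$, $\big|\big(\Phi_{\pi,\rho}\vec v\big)_i\big| = \big|\sum_j (\Phi_{\pi,\rho})_{ij} v_j\big|\le \sum_j (\Phi_{\pi,\rho})_{ij}\,|v_j| \le \|\vec v\|_\infty$; that is, averaging against a probability row is nonexpansive in $\|\cdot\|_\infty$. Taking $\vec v=\vec A_1-\vec A_2$ and chaining the three facts gives $|B[\vec A_1]_i-B[\vec A_2]_i|\le\gamma\,\|\vec A_1-\vec A_2\|_\infty$ for every $i$; maximizing over $i$ concludes the proof.

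There is no serious obstacle here: each operation comprising $B$ is individually nonexpansive, and the single source of contraction is the multiplicative factor $\gamma<1$ that survives the argument untouched. The one point requiring care is the adversarial max--min, since one cannot naively split it across $\vec A_1$ and $\vec A_2$; this is precisely the role of Lemma \ref{lem:maxmin}, and invoking it with the policy spaces as the abstract index sets is the cleanest way to dispatch it. I would also remark that the same three facts specialize verbatim to the one-player operator with $\Phi_\pi$, so no separate argument is needed in that case.
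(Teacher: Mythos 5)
Your proposal is correct and follows essentially the same route as the paper: both arguments reduce the claim to the nonexpansiveness of the $\min\{\vec h,\cdot\}$ cap (which the paper establishes via the identity $\min\{a,b\}=\tfrac{1}{2}((a+b)-|a-b|)$ and the triangle inequality, where you simply cite the elementary inequality $|\min\{c,a\}-\min\{c,b\}|\le|a-b|$ directly), then invoke Lemma \ref{lem:maxmin} over the policy spaces, and finally use that each row of $\Phi_{\pi,\rho}$ is a probability vector so that only the factor $\gamma$ contracts. The componentwise presentation is a minor stylistic difference, not a different proof.
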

\begin{proof} Defining $B[\cdot]$ as in \eqref{eq:backup_mdr}, take $A_1, A_2 \in \RR^{n_G}$:
\begin{equation*}
\begin{split}
&||B[\vec{A}_1] - B[\vec{A}_2]||_{\infty}=\\
&||\min\left\{ \vec{h}, \underset{\pi}{\max}\text{ }\underset{ \rho}{\min} \gamma \Phi_{\pi, \rho} \vec{A}_1 \right \}  - \min\left\{ \vec{h}, \underset{\pi}{\max}\text{ }\underset{ \rho}{\min} \gamma \Phi_{\pi, \rho} \vec{A}_2 \right \}||_{\infty}
 .
\end{split}
\end{equation*}%
\noindent Leveraging the identity $\min\{a,b\} = \frac{1}{2}((a+b)- |a-b|)$ and using the shorthand $\Pi[\vec{A}]=\underset{\pi}{\max}\text{ }\underset{ \rho}{\min} \gamma \Phi_{\pi, \rho} \vec{A}$ , the above is equal to 
\begin{equation*}
\frac{1}{2} ||(\Pi[\vec{A}_1]  - \Pi[\vec{A}_2] ) -  (|\Pi[\vec{A}_1]-\vec{h}|  - |\Pi[\vec{A}_2]-\vec{h}|)||_{\infty} ,
\end{equation*}%
\noindent which by the triangle inequality, is upper bounded by
\begin{equation*}
\frac{1}{2} ||(\Pi[\vec{A}_1]  - \Pi[\vec{A}_2] )||_{\infty} + \frac{1}{2}  ||(|\Pi[\vec{A}_1]-\vec{h}|  - |\Pi[\vec{A}_2]-\vec{h}|)||_{\infty} .
\end{equation*}%
\noindent Given the inequality $|a-b| > |(|a|-|b|)|$, this has upper bound
\begin{equation*}
\begin{split}
&||(\Pi[\vec{A}_1]  - \Pi[\vec{A}_2] )||_{\infty}=\\ 
&||\underset{\pi}{\max}\text{ }\underset{ \rho}{\min} \gamma \Phi_{\pi, \rho}\vec{A}_1 - \underset{\pi}{\max}\text{ }\underset{ \rho}{\min} \gamma \Phi_{\pi, \rho} \vec{A}_2||_{\infty} .
\end{split}
\end{equation*}%
\noindent Finally from Lemma \ref{lem:maxmin}, the last upper bound is 
\begin{equation*}
\underset{\pi}{\max}\text{ }\underset{ \rho}{\max} ||\gamma \Phi_{\pi, \rho} (\vec{A}_1 - \vec{A}_2)||_{\infty} \leq \gamma||\vec{A}_1 - \vec{A}_2||_{\infty} ,
\end{equation*}%
\noindent where the last inequality comes from the fact that $\Phi_{\pi, \rho}$ is a stochastic matrix for all policies, thus $||\Phi_{\pi, \rho}||_{\infty} = 1$.
\end{proof}

\subsection{Under- and Over-Approximating the Reachable Set}
With MDR formulation there is no particular level curve of the value function that characterizes the reachable set. However, it is possible to find level curves that correspond to over and under approximations of the reachable set. 

We have the inequality $Z(x) \geq V(x)$ because the terms being discounted in the outcome are nonpositive. It immediately follows that
\begin{equation} \label{eq:reach_set}
\{x \mid V_{\lambda}(x) \le 0\} \subseteq \R(\T) ,
\end{equation}  

For an over-approximation we first need to characterize the error between $Z(x)$ and $V(x)$. The difference between the two functions can be bounded. Define $\tau(x)$ as the time when the minimum distance to the target is achieved for a trajectory starting at state $x$ under the optimal control and disturbance signals. Then we have the following bound
\begin{equation}
Z(x) - V(x)  \leq (L - l(\bx_{x}^{\bu,\bdelta}(\tau(x))))( 1 -  \exp(-\lambda \tau(x)))
.
\end{equation}%
\noindent Noting that $V(x)=l(\bx_{x}^{\bu,\bdelta}(\tau(x)))$, we get the resulting inequality
\begin{equation} \label{eq:val_error}
Z(x) -  V(x) \exp(-\lambda \tau(x)) \leq L( 1 -  \exp(-\lambda \tau(x))) ,
\end{equation}%
\noindent Furthermore, outside the reachable set $V(x)>0$ leading to
\begin{equation}
Z(x) -  V(x)  \leq L( 1 -  \exp(-\lambda \tau(x))) \quad \forall x \not\in \R(\T) .
\end{equation}

Assuming an upper bound  ${\bar{\tau} \geq \tau(x)}$, we have the following over-approximation for the reachable set
\begin{equation} \label{eq:reach_set}
\R(\T) \subseteq  \{x \mid Z(x) \le L( 1 -  \exp(-\lambda \bar{\tau})) \} .
\end{equation} 

It is clear from \eqref{eq:val_error} that the tightness of the approximations can be tuned via the discount rate $\lambda$.

\section{Improving Convergence \label{sec:conv}}
In this section we present methods that may yield much faster convergence than value iteration. These approaches have been extensively applied to the SDR setting, and we now apply them to the MDR problem. 

\subsection{Policy Iteration}

In the one player setting (control only), if the backup operator is a contraction mapping the solution can also be obtained via \emph{policy iteration}. First define the \emph{policy backup operator} $B^{\pi}[\cdot]: \RR^{n_G} \rightarrow \RR^{n_G}$, 
\begin{equation} \label{eq:backup_policy}
B^{\pi}[\vec{A}] = \min\left\{ \vec{h}, \gamma \Phi_{\pi} \vec{A} \right \} .
\end{equation}%
\noindent This operator is a contraction mapping. The policy iteration algorithm generates the sequence $\{\vec{U}^{\pi^k}\}_{k}$ according to
\begin{subequations}\label{eq:pi}
\begin{align}
&\vec{U}^{\pi^k} = B^{\pi^k}[\vec{U}^{\pi^k}] \label{eq:pi_a} ,\\
&\pi^{k+1} = \arg\underset{\pi}{\max}B^{\pi}[\vec{U}^{\pi^k}] ,\\ 
&\vec{U} = \lim_{k\rightarrow \infty} \vec{U}^{\pi^k}.
\end{align}
\end{subequations}

Note that ${\vec{U}^{\pi^k}}$, the fixed point of \eqref{eq:pi_a}, is obtained through value iteration with the policy backup. Finding the fixed point of the policy backup is computationally less intensive than the other backup operators presented thus far, since no optimization is performed over policies. The policy iteration algorithm only optimizes over policies when switching policies.  

In practice policy iteration is typically recommended over value iteration because policies can converge faster than values resulting in faster convergence of the algorithm \cite{Russell2003}. A more detailed analysis of policy iteration (as it pertains to SDR) can be found in \cite{Bokanowski2009,Howard1964, Puterman1979}. 

We now prove that policy iteration converges for the MDR problem.
\begin{proposition}
Assuming a finite control set ${\U=\{u_i\}_{i=1}^{n_\U}}$, the policy iteration algorithm converges to the vectorized value function obtained from \eqref{eq:val_iter_backup} without the disturbance.
\end{proposition}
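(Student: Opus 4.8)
The plan is to run the classical policy-iteration convergence argument, adapted to the minimum-discounted-reward backup, resting on three ingredients: monotonicity of the policy backup $B^\pi$, the identity $\max_\pi B^\pi = B$, and finiteness of the policy space. First I would record two structural facts. Since $\gamma\ge 0$ and each $\Phi_\pi$ is a nonnegative stochastic matrix, $\vec A \ge \vec A'$ (componentwise) implies $\gamma\Phi_\pi\vec A \ge \gamma\Phi_\pi\vec A'$, and applying $\min\{\vec h,\cdot\}$ preserves this order; hence each $B^\pi$ is monotone. Second, because $\vec h$ does not depend on $\pi$ and $x\mapsto\min\{c,x\}$ is nondecreasing, the inner min and the outer max commute componentwise,
\[
\max_\pi B^\pi[\vec A] = \max_\pi \min\{\vec h,\gamma\Phi_\pi\vec A\} = \min\{\vec h,\max_\pi\gamma\Phi_\pi\vec A\} = B[\vec A],
\]
where $B$ is the one-player backup \eqref{eq:backup_mdr} with the disturbance dropped. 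Since row $i$ of $\Phi_\pi$ depends only on $\pi(x_i)$, this maximization decouples across grid nodes, and with $\U$ finite the argmax defining $\pi^{k+1}$ in \eqref{eq:pi} is attained by a single policy.

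Next I would establish monotone policy improvement, namely $\vec U^{\pi^{k+1}} \ge \vec U^{\pi^k}$. By the definition of $\pi^{k+1}$, the commutation identity, and \eqref{eq:pi_a},
\[
B^{\pi^{k+1}}[\vec U^{\pi^k}] = \max_\pi B^\pi[\vec U^{\pi^k}] = B[\vec U^{\pi^k}] \ge B^{\pi^k}[\vec U^{\pi^k}] = \vec U^{\pi^k}.
\]
Applying the monotone map $B^{\pi^{k+1}}$ repeatedly to $B^{\pi^{k+1}}[\vec U^{\pi^k}]\ge\vec U^{\pi^k}$ produces a nondecreasing chain $(B^{\pi^{k+1}})^n[\vec U^{\pi^k}] \ge \vec U^{\pi^k}$; since $B^{\pi^{k+1}}$ is a contraction, the left-hand side converges to its unique fixed point $\vec U^{\pi^{k+1}}$, giving $\vec U^{\pi^{k+1}}\ge\vec U^{\pi^k}$.

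Finally I would invoke finiteness to terminate. With $\U$ finite and $n_G$ grid nodes there are at most $n_\U^{n_G}$ policies, hence finitely many distinct value vectors $\vec U^\pi$; a componentwise nondecreasing sequence drawn from a finite set must become constant after finitely many steps, so $\vec U^{\pi^{k+1}} = \vec U^{\pi^k} =: \vec U$ for some $k$. At such a $k$, the fixed-point property of $B^{\pi^{k+1}}$ together with the improvement identity gives $\vec U = B^{\pi^{k+1}}[\vec U] = B[\vec U]$, so $\vec U$ is a fixed point of $B$. Since $B$ is a contraction in $\|\cdot\|_\infty$ (the preceding theorem, specialized to the one-player case by dropping $\rho$), its fixed point is unique and coincides with the limit of value iteration \eqref{eq:val_iter_backup}; therefore policy iteration converges to the value-iteration value function.

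The main obstacle is the policy-improvement step: turning the one-step improvement $B^{\pi^{k+1}}[\vec U^{\pi^k}]\ge\vec U^{\pi^k}$ into the value-level inequality $\vec U^{\pi^{k+1}}\ge\vec U^{\pi^k}$ requires both the min--max commutation (which uses that $\vec h$ is $\pi$-independent, a feature specific to the MDR backup rather than the additive SDR backup) and the contraction-based comparison via iterated application of $B^{\pi^{k+1}}$. Care is also needed to confirm that the per-node decoupling of the maximization legitimizes selecting $\pi^{k+1}$ as a single policy attaining the componentwise maxima.
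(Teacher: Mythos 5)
Your proposal is correct and follows essentially the same route as the paper: monotone policy improvement ($\vec{U}^{\pi^{k+1}}\ge\vec{U}^{\pi^k}$), finiteness of the policy set to force termination, and the identity $\max_\pi B^\pi=B$ plus uniqueness of the contraction's fixed point to identify the limit with the value-iteration solution. The only difference is cosmetic: the paper proves the improvement inequality by comparing two coupled auxiliary sequences $\vec{X}^i$ and $\vec{Y}^i$ started at $\vec{U}^{\pi^k}$, whereas you iterate the monotone map $B^{\pi^{k+1}}$ directly; your version is, if anything, more explicit about the min--max commutation and the final identification step, both of which the paper leaves implicit.
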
%
\noindent \begin{proof}
It's sufficient to show that sequence ${\{ \vec{U}^{\pi^{k}}\}}_k$ is nondecreasing, i.e. $\vec{U}^{\pi^{k+1}} \geq \vec{U}^{\pi^k}$ $\forall k$. Since the number of policies is finite the nondecreasing criterion implies that the sequence of vectors will converge. Also note that $\underset{\pi}{\max}\text{ }B^{\pi}[\cdot] = B[\cdot]$ as defined in \eqref{eq:val_iter_backup} without the disturbance, so the sequence converges to the vectorized value function.

Consider two sequences ${\vec{X}^{i+1}=\min\big\{\vec{U}^{\pi^k}, \gamma \Phi_{\pi^{k+1}}\vec{X}^i \big\}}$ and ${\vec{Y}^{i+1}=\min\big\{\vec{h}, \gamma \Phi_{\pi^{k+1}} \vec{Y}^i \big\}}$, with ${\vec{X}^0=\vec{Y}^0=\vec{U}^{\pi^k}}$.
Since ${\vec{h}\geq \vec{U}^{\pi^k}}$, by \eqref{eq:pi_a}, we have ${\vec{Y}^i \geq \vec{X}^i,\forall i \geq 0}$.
Next, we note that ${\vec{X}^{1}=\min\big\{\vec{U}^{\pi^k},\gamma \Phi_{\pi^{k+1}}\vec{U}^{\pi^k}\big\}}$ ${=\min\big\{\vec{h}, \gamma \Phi_{\pi^{k}}\vec{U}^{\pi^k},\gamma \Phi_{\pi^{k+1}}\vec{U}^{\pi^k}\big\}}$.
Furthermore, the third term in the $\min\{\,\}$ is greater than the second, so $\vec{X}^{1}=\min\big\{\vec{h}, \gamma \Phi_{\pi^{k}}\vec{U}^{\pi^k} \big\} =  \vec{U}^{\pi^k}$, thus ${\vec{X}^{i} =  \vec{U}^{\pi^k},\forall i \geq 0}$.
Lastly, $\lim_{i\rightarrow \infty}\vec{Y}^i= \vec{U}^{\pi^{k+1}}$, which is the fixed-point of the contraction mapping that generates the sequence.
Bringing everything together we have $\vec{U}^{\pi^{k+1}} = \lim_{i\rightarrow \infty}\vec{Y}^i \geq   \lim_{i\rightarrow \infty}\vec{X}^i = \vec{U}^{\pi^{k}}.$
\end{proof}

\subsection{Multigrid Approach}

The accuracy of the approximation scheme depends on the fineness of the discretization. Finer grids have lower approximation error, but at the cost of increased computational effort. For a desired level of accuracy the number of grid points (and thus computation) necessary grows exponentially with the state space dimension, which is the well-known \emph{curse of dimensionality}. 

One possible way to manage this trade-off between computation and accuracy is a multigrid approach. The idea is to first solve for the approximation on a coarse grid (e.g. grid spacing $2\Delta x_j$) and then use the final solution to initialize either value iteration or policy iteration on a finer grid. The procedure can also be stacked, i.e. given $m$ grids of increasing fineness we can produce approximations of increasing accuracy by using each as an initialization for the subsequent grid. This is only possible because contraction mappings allow great flexibility in the initialization, and yield faster convergence for good initializations. Due to the curse of dimensionality obtaining a good approximation is exponentially cheaper on the coarse grid. 

Multigrid approaches have been applied extensively for SDR problems, where empirical and theoretical improvements have been shown \cite{Alla2015, Chow1991}. We compare multigrid approaches to value iteration in Section \ref{sec:sim}.

\section{Learning Reachable Sets \label{sec:learn}}

When the system model is unknown or complex, the reachable set must be learned from data. There are two approaches for this: model-based RL and model-free RL. In this section we will focus on the model-based approach, and conclude with a brief discussion on the model-free approach and its connection to RL. For ease of presentation we consider the one player case.

\subsection{Model-based} \label{sec:model_based}

In the model-based approach the model is assumed to be parameterized by a parameter vector $\mu$. 
The data is first used to fit the parameters, and then the value function is computed given the model. As more data is collected the process can be repeated. Here we are intentionally vague about the data and the fitting process, and we focus our attention on how to obtain the value function given the fitted model.

The data collection and fitting produce a sequence of parameters $\{\mu_k\}_k$, which in turn corresponds to a sequence of models $\{f_{\mu_k}\}_k$ and vectorized value functions $\{\vec{V}_{\mu_k}\}_k$ for the MR setting and $\{\vec{U}_{\mu_k}\}_k$ for the MDR setting. With the MR formulation the value iteration algorithm must be initialized with $\vec{l}$ every time a new value function is computed. However, with the MDR formulation $\vec{U}_{\mu_k}$ can be used as the initialization when computing $\vec{U}_{\mu_{k+1}}$. Assuming regularity in the dynamics (with respect to $\mu$), if the parameters only deviate slightly between iterations then $\vec{U}_{\mu_{k}}$should be a good approximation of $\vec{U}_{\mu_{k+1}}$, resulting in faster convergence. If this is not the case then $\vec{l}$ can be used as the default initialization. Furthermore, the following classical result on contraction mappings can provide insight on selecting the initialization:  
\begin{proposition} \label{prop:init_dist}
If $M(\cdot): \RR^{n_G} \rightarrow \RR^{n_G}$ is a contraction mapping in the norm $|| \cdot ||$ over the space $\RR^{n_G}$ with Lipschitz constant $0\leq \kappa < 1$ and fixed-point $\vec{A}^*$, then for any ${\vec{A} \in \RR^{n_G}}$,  ${||\vec{A}^* - \vec{A}|| \leq \frac{1}{1-\kappa}||M(\vec{A}) - \vec{A}||}$. 
\end{proposition}

Given Proposition \ref{prop:init_dist}, when computing $\vec{U}_{\mu_{k+1}}$an upper bound can be computed on its distance to $\vec{U}_{\mu_{k+1}}$ and $\vec{l}$ by applying the contraction mapping to each. The initialization can then be selected to minimize this upper bound. In the worst case only one additional backup operation is performed compared to the default case. 

\subsection{Model-free}

Another approach to handling an unknown model, is to compute the value function directly from the data. This is the approach taken in many RL algorithms that attempt to approximate the value function for SDR problems with unknown models.

Temporal difference (TD) learning is at the heart of many of these methods, which includes TD-lambda\cite{Sutton1988}, Q-learning\cite{Watkins1992}, Deep Q Networks\cite{Mnih2016}, and actor-critic methods\cite{Konda2000}. The key idea is to represent the value function with a parametric function approximator\footnote{A simple function approximator would be interpolation on a grid where the parameters are the grid node values.}, define a loss function on the parameters that is minimized when the value function is the fixed-point of the SDR backup operator, and to update the parameters by performing stochastic gradient descent on the loss function with samples from a real system or simulator.

For ease of presentation consider an autonomous system $f(x)$ (which might be due to a fixed policy), and a sequence of state transitions obtained from the system $\{(x_i,x_i^+)\}$, where $x^+=\bx_x(\Delta t)$ and $\Delta t$ is the time step. If the value function is approximated by $V_\theta(x)$ with parameters $\theta$, then the loss function and update rule for TD are given by
\begin{equation}
\begin{split}
&\L(\theta)=\frac{1}{2}\big(V_{\theta}(x) - (r(x)+\gamma V_{\theta}(x^+))\big)^2 ,\\
&\theta^{i+1} \leftarrow \theta^i + \alpha \nabla_{\theta}V_{\theta^i}(x_i)\big(r(x_i)+\gamma V_{\theta^i}(x^+_i) - V_{\theta^i}(x_i)\big),
\end{split}
\end{equation}%
\noindent where $\alpha \in [0,1]$ is the learning rate and $\theta^i$ is the parameters at iteration $i$. A similar idea can be used in the MDR setting. Taking the approximation $U_\theta(x)$ with parameter $\theta$, the loss function and update rule for the MDR setting would be
\begin{equation}
\begin{split}
&\L(\theta)=\frac{1}{2}\big(U_{\theta}(x) - \min\{h(x),\gamma U_{\theta}(x^+)\}\big)^2,\\
&\theta^{i+1} \leftarrow \theta^i + \alpha\nabla_{\theta}U_{\theta^i}(x_i)\big(\min\{h(x_i),\gamma U_{\theta^i}(x^+_i)\} - U_{\theta^i}(x_i)\big).
\end{split}
\end{equation}

A similar modification can be made to the other TD-based algorithms, in particular those that take into account control actions. We leave the investigation of these ideas for future work.

\section{Experiments \label{sec:sim}}
This section uses two benchmark models, double integrator and pursuit-evasion game, to exemplify the numerical properties of the MDR formulation. The double integrator model will be used to display the over-/under-approximation of the reachable set, as well as to compare policy iteration with value iteration. Both of the benchmarks will be used to demonstrate the advantages of multigridding, and initializing value iteration with pre-computed solutions to similar problems. 

Unless stated otherwise all algorithms are initialized with $\vec{h}=\vec{l}-L$, and are considered converged when the distance (in the infinity norm) between consecutive iterates falls below $\epsilon =.001$. All experiments were run on a 2016 MacBook Pro with Core i7 processor and 16GB RAM.

\subsection{Double Integrator}
The doube integrator consists of two states $(x_1, x_2)$ , and  control $u \in [-u_{max}, u_{max}]$ with dynamics,
\begin{equation}
\begin{split}
\dot{x_1} & = x_2 \\
\dot{x_2} & = u 
\end{split}
\end{equation}
\noindent The state space is discretized into a $161 \times 161$ grid on the domain $[-1,5] \times [-5,5]$, and $u_{max}=2$.

The task is to keep the state trajectory inside the box $\K = [0,4] \times [-3,3]$, thus the target is taken to be its complement $\T=\K^C$. For ease of exposition we define the \emph{safe set} $\Omega(\T) := \R(\T)^C$.

We first show, in Fig.~\ref{fig:convergence} that different level curves of $Z$ over approximates~(bold line) and under approximates~(dotted lines) the analytic safe set~(shown in black) for two values of $\lambda = 0.1, 0.2$, with $\bar{\tau}=2$. As expected smaller values of $\lambda$, yield tighter approximations. 

To verify the convergence properties of the MDR formulation we initialize value iteration with the zero vector $\vec{0}\in \RR^{N_G}$ with $\lambda=0.1$. The error (in the infinity norm) between the converged solution and the one visualized in Fig.~\ref{fig:convergence} is $0.000299$, suggesting convergence to the same fixed point. Under the MR setting value iteration fails to converge with this particular initialization.

\begin{figure}
\includegraphics[scale=0.5]{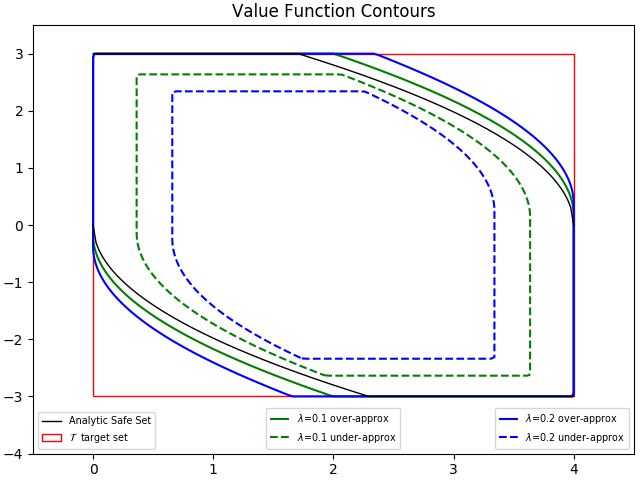}
\caption{The analytic safe set and target set $\mathcal{T}$ are shown in black (interior) and red (exterior), respectively. The over and under approximated $Z$ are shown in bold green and dotted green for $\lambda=0.1$; and bold blue and dotted blue line for $\lambda  = 0.2$ (all interior).}
\label{fig:convergence}
\end{figure}

We now compare value iteration and policy iteration with increasing number of discrete actions in Table~\ref{tab:v_vs_p}. In the table we see that the runtime of value iteration increases linearly with the increase in the number of actions, and policy iteration scales much better. In our particular implementation, the overall runtime favors value iteration, but it is important to note that the majority of the time in policy iteration is spent constructing $\Phi_{\pi_u}$, which is denoted by $T_{\Phi_{\pi_u}}$.\footnote{The data structure used to represent the interpolation is very efficient for sparse matrix multiplication, but is not ideal for indexing, which is necessary to create $\Phi_{\pi_u}$, and results in a relatively large $T_{\Phi_{\pi_u}}$.} Excluding this cost, policy iteration becomes more attractive.

\begin{table}
\centering
\caption{Value Iteration vs Policy Iteration} CPU time in seconds \\
\label{tab:v_vs_p}
\begin{tabular}{|c| c| c| c|}
\hline
\# actions & VI & \multicolumn{2}{|c|}{Policy Iteration} \\ \cline{3-4}
 &  $T_{total} $ & $T_{total}$ & $T_{total} - T_{\Phi_{\pi_u}}$ \\ \hline
2 & 1.468  & 78.197  & 0.102 \\ \hline
50 &  8.753 &  302.456 & 1.007 \\ \hline
250 & 36.973 &  308.565 & 4.318 \\ \hline
500 & 65.305 &  326.280 & 9.760\\
\hline
\end{tabular}
\end{table}

Next, we look at a multigrid approach versus value iteration. For the multigrid approach we also need to run value iteration on a coarse grid, which we construct to have half the resolution per dimension of the nominal grid, e.g. if the nominal grid has $41^2$ nodes then the coarse grid has $21^2$ nodes. The results are shown in Table~\ref{tab:multigrid_di}. We first run value iteration with the standard initialization on both the coarse and fine grid. This produces the values in columns two and three. We then run value iteration on the fine grid initialized with the coarse solution (CS), which makes up column four. Column five (multigrid) is obtained by adding columns two and four. From the table it is clear that the multigrid approach outperforms value iteration, especially as the number of nodes increases.

\begin{table}
\centering
\caption{Double Integrator: Value iteration (VI) with Multigrid} CPU time in seconds
\label{tab:multigrid_di}
\begin{tabular}{|c| c| c| c| c| }
\hline
\# nodes & Coarse grid & Fine grid &  Fine grid-CS & Multigrid \\ \hline
$40^2$ & $0.012$ & $0.025$ & $0.019$ & $0.031$ \\ \hline
$80^2$ & $0.019$ & $0.116$ & $0.041$ & $0.060$\\ \hline
$160^2$ & $0.153$ & $1.110$ & $0.084$ & $0.237$\\ \hline
\end{tabular}
\end{table}

Lastly, denoting the current model as the nominal model $M_n$, we construct two different models: a heavy model $M_h$ with $u_{max}=1.0$, and a light model $M_l$ with $u_{max}=4.0$. This can be interpreted as two systems that have different control authorities due to their different masses. The $M_n$ MDR value function will be less than that of model $M_l$, but greater than that of $M_h$. We compute the value functions for $M_l$ and $M_h$ both initialized with the default initialization, and with the solution for model $M_n$, which we refer to as a \emph{warm start} (WS). This experiment is motivated by Section \ref{sec:model_based}, where we discussed the computation of reachable sets as the system model changes due to new observations from the system. Here we are not concerned with how the model estimates are obtained, but rather how to produce the reachable set for the latest model estimate as quickly as possible. Within this context $M_n$ can be viewed as an old model, and $M_l$ and $M_h$ can be thought of as two possible new models that were inferred from observations. The results for the experiment are shown in Table~\ref{tab:ws_di}. In both cases ($M_l$ and $M_h$) we see that leveraging the solution for $M_n$ improves the convergence time.

\begin{table}
\centering
\caption{Double Integrator: Value Iteration (VI) with Warm Start (WS)} CPU time in seconds
\label{tab:ws_di}
\begin{tabular}{|c| c| c| c| c| c|}
\hline
\# nodes & $M_n$ & $M_l$ &  $M_l$-WS & $M_h$ & $M_h$-WS \\ \hline
$40^2$ & $0.029$ & $0.044$ & $0.041$ & $0.022$ & $0.014$\\ \hline
$80^2$ & $0.205$ & $.364$ & $0.273$ & $0.097$ & $0.095$\\ \hline
$160^2$ & $1.444$ & $2.727$ & $2.304$ & $1.257$ & $1.116$\\ \hline
\end{tabular}
\end{table}

\subsection{Pursuit-Evasion Game}

We now consider the pursuit-evasion game described in \cite{Mitchell2005}. In the game player I (the control) tries to avoid being captured by player II (the disturbance) on a two dimensional plane. Each player is modeled as a simple kinematic point object with planar position and heading, fixed linear velocity and controllable angular velocity. Taking player I to be at the origin the states $(x_1, x_2, x_3)$ are the relative position and heading of player II and the dynamics are

\begin{equation}
\begin{split}
&\dot{x_1}= -v_u+v_d \cos x_3 + ux_2\\ 
&\dot{x_2}= v_d \sin x_3 - ux_1\\ 
&\dot{x_3}= d-u
\end{split}
\end{equation}

The state space is over the domain $[-6,20] \times [-10,10] \times [0,2\pi[$ with $\U=[-u_{max},u_{max}]$ and $\D=[-d_{max}, d_{max}]$.

Player I is considered captured when the relative distance (in position) between both players is less than $R>0$, thus the target set is given by

\begin{equation}
\T= \{x| x_1^{2}+x_2^2 < R^2\}
\end{equation}

We first compute the value functions for the MR and MDR on a $41 \times 41 \times 41$ grid, and setting the model parameters to $v_u=v_d=5$, $u_{max}=d_{max}=1$, and $R=5$. This will be referred to as the nominal model $M_n$. A visualization of the zero sub-level set for both $V(x)$ and $Z(x)$ for $\lambda=0.001$ is shown in Fig.~\ref{fig:air3D}.


\begin{figure*}[h]
    \centering
    \begin{subfigure}[t]{0.3\textwidth}
        \centering
        \includegraphics[trim= 4cm 0cm 0cm 0cm, scale=0.45]{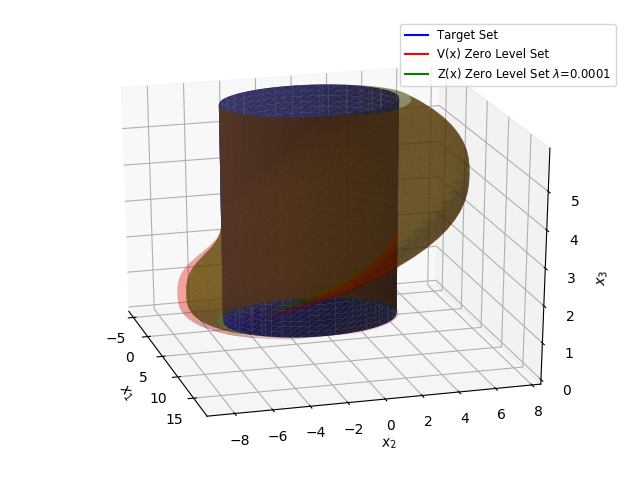}
    \end{subfigure}%
    ~ 
    \begin{subfigure}[t]{0.3\textwidth}
        \centering
        \includegraphics[trim= 3cm 0cm 0cm 0cm, clip=true, scale=0.45]{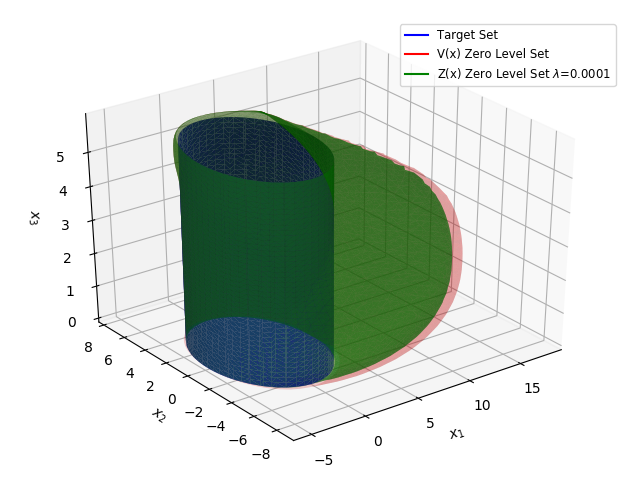}
    \end{subfigure}
    ~
    \begin{subfigure}[t]{0.3\textwidth}
        \centering
        \includegraphics[trim= 3cm 0cm 0cm 0cm, clip=true, scale=0.45]{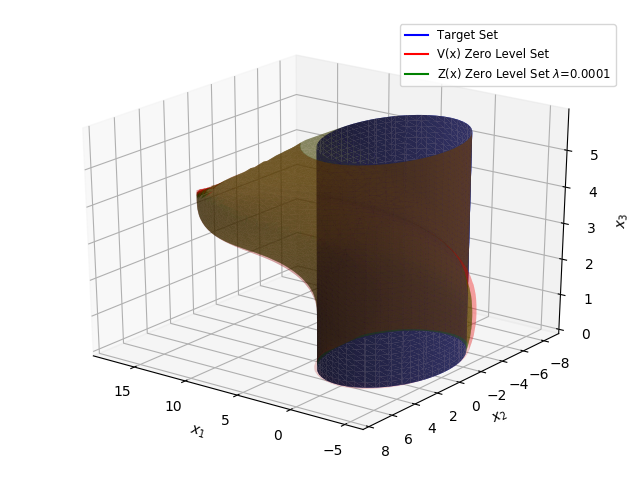}
    \end{subfigure}%
    \caption{The target set $\mathcal{T}$ (blue cylinder), zero sub-level sets of $V$ (red) and $Z$ (green) shown from three different perspectives. The discount rate for $Z$ is $\lambda=0.01$. Note that the zero sub-level set of $Z$ is a subset of the zero sub-level set of $V$.}
    \label{fig:air3D}
\end{figure*}

In the first experiment we compare a multigrid approach to value iteration.  The results are shown in Table~\ref{tab:multigrid_pe}. The experiment and table follows the same structure used for the double integrator model. Similar to the double integrator model, the multigrid approach outperforms value iteration for the pursuit-evasion game.

\begin{table}
\centering
\caption{Pursuit Evasion: Value iteration (VI) with Multigrid} CPU time in seconds
\begin{tabular}{|c| c| c| c| c| }
\hline
\# nodes & Coarse grid & Fine grid &  Fine grid-CS & Multigrid \\ \hline
$40^3$ & $2.068$ & $29.684$ & $24.320$ & $26.388$ \\ \hline
$80^3$ & $33.166$ & $352.099$ & $317.444$ & $350.610$\\ \hline
\end{tabular}
\label{tab:multigrid_pe}
\end{table}

We now construct two other models by tweaking $M_n$: setting $u_{max}=1.5$, which gives the evader an advantage, we get model $M_e$, and setting $d_{max}=1.5$, which gives the pursuer an advantage,  we get model $M_p$. In the final experiment we look at the impact of initializing value iteration with a solution from a similar model. Just like in the previous benchmark example, this experiment is motivated by Section \ref{sec:model_based}, where now $M_e$ and $M_p$ represent two possible models inferred from the system observations. In this context we have just ``learned" that the evader/pursuer is more maneuverable ($M_e$/$M_p$). We compute both value functions with and without setting the initialization to the solution for $M_n$. Again, we refer to this initialization as a warm start. The results are shown in Table~\ref{tab:ws_pe}.

\begin{table}
\centering
\caption{Pursuit Evasion: Value iteration (VI) with Warm Start (WS)} CPU time in seconds
\begin{tabular}{|c| c| c| c| c| c|}
\hline
\# nodes & $M_n$ & $M_e$ &  $M_e$-WS & $M_p$ & $M_p$-WS \\ \hline
$40^3$ & $35.043$ & $32.242$ & $21.483$ & $26.319$ & $23.366$ \\ \hline
$80^3$ & $439.751$ & $416.965$ & $308.821$ & $300.847$ & $296.568$\\ \hline
\end{tabular}
\label{tab:ws_pe}
\end{table}

\section{Conclusions and Future Work \label{sec:end}}

We have presented a novel minimum discounted reward HJ formulation for approximating reachable sets. The main advantage of this new formulation over previous work is that the solution can be obtained as the unique fixed point to a contraction mapping. We also showed how other solutions like policy iteration, and multigrid approaches can be used to yield faster convergence. 

The benefits listed so far, are  within the context of the traditional control paradigm, where we have or assume a fixed model of the system under consideration. However, as learning and data-driven approaches become more powerful and pervasive, perhaps the greatest contribution of this work is that it can lie somewhere in between traditional control and model-free reinforcement learning. The approach is certainly model-based, but because of its agnosticism to initialization it also has the flexibility to incorporate data and build towards solutions iteratively. We foreshadowed how this can be done with temporal difference learning, and in the future we plan on exploring how RL algorithms can be used with this formulation to approximate reachable sets for systems with unknown models or that are high-dimensional. 

\bibliographystyle{plain}
\bibliography{library}
\newpage

\begin{IEEEbiography}[{\includegraphics[width=1in,height=1.25in,clip,keepaspectratio]{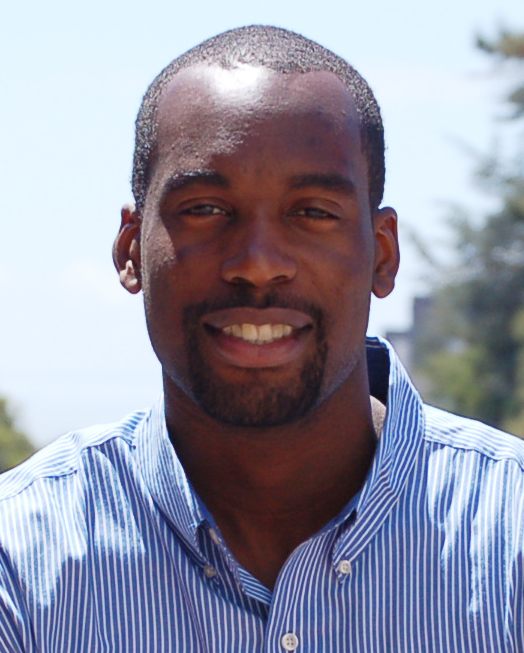}}]{Anayo K. Akametalu} is a PhD. candidate in Electrical Engineering and Computer Sciences at the University of California, Berkeley. He obtained his B.S. degree in Electrical Engineering from the University of California, Santa Barbara in 2012. His research interests lie at the intersection of control theory and reinforcement learning. He has been funded through the National Science Foundation Bridge to Doctorate Fellowship, UC Berkeley Chancellor's Fellowship, and GEM Fellowship.
\end{IEEEbiography}
\vspace{-3cm}
\begin{IEEEbiography}[{\includegraphics[width=1in,height=1.25in,clip]{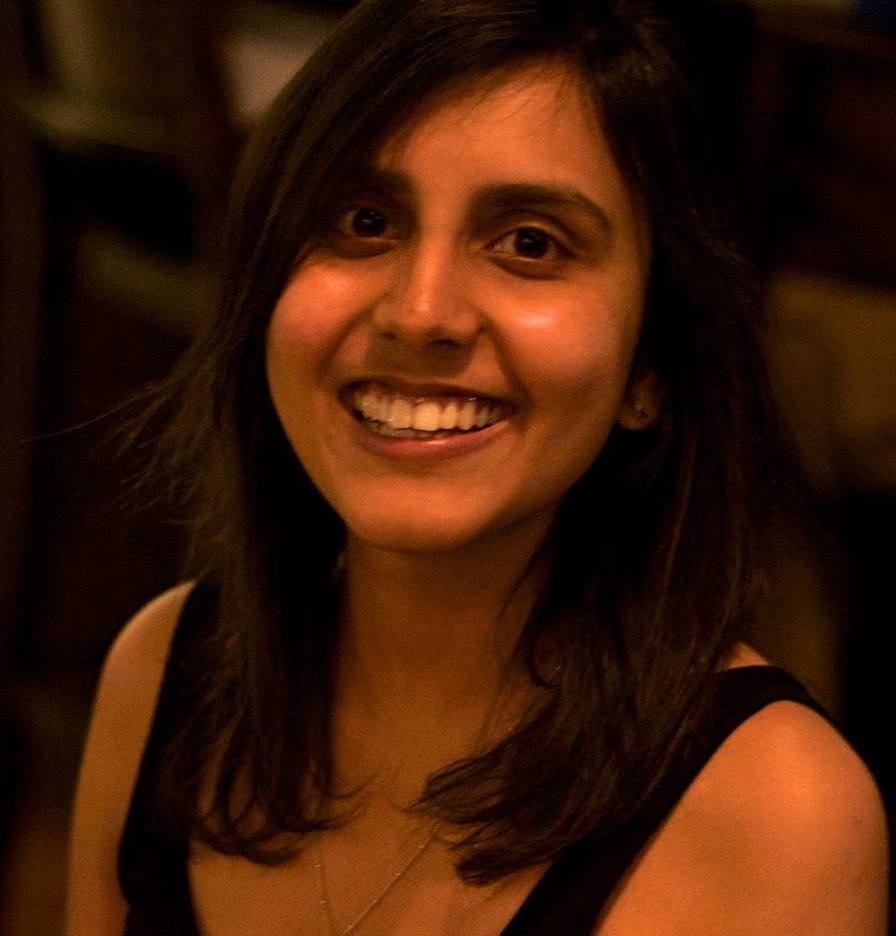}}]{Shromona Ghosh} received her Bachelor in Technology in Electronics and Communication Engineering from National Institute of Technology, Karnataka in 2013. She is currently a PhD candidate at University of California, Berkeley . Her research interests lie in the intersection of Formal Methods, Control Theory and Machine Learning. Specifically, she is looking into developing tools for the formal analysis of systems with learning components. 
\end{IEEEbiography}
\vspace{-3cm}
\begin{IEEEbiography}[{\includegraphics[width=1in,height=1.25in,clip,keepaspectratio]{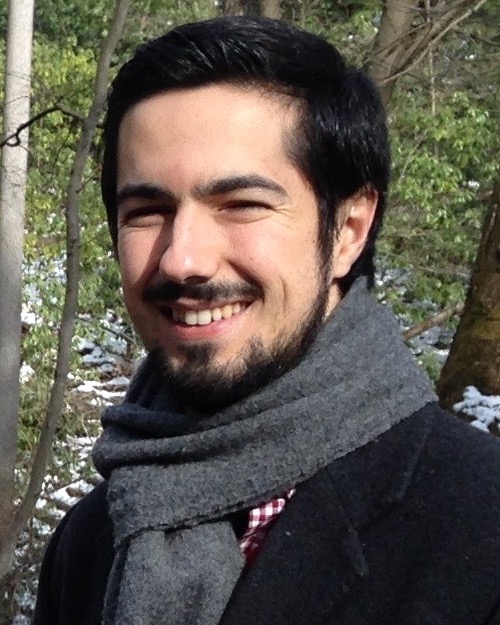}}]{Jaime F. Fisac} is a Ph.D. candidate in Electrical Engineering and Computer Sciences at the University of California, Berkeley. He received a B.S./M.S. degree in Electrical Engineering from the Universidad Polit{\'e}cnica de Madrid, Spain, in 2012, and a M.Sc. in Autonomous Vehicle Dynamics and Control from Cranfield University, UK, in 2013. He is a recipient of the ``la~Caixa'' Foundation Fellowship (2013-2015). His research interests lie in control theory, artificial intelligence, and cognitive science, with a focus on safety for robotic and AI systems operating closely with people.
\end{IEEEbiography}
\vspace{-3cm}
\begin{IEEEbiography}[{\includegraphics[width=1in,height=1.25in,clip,keepaspectratio]{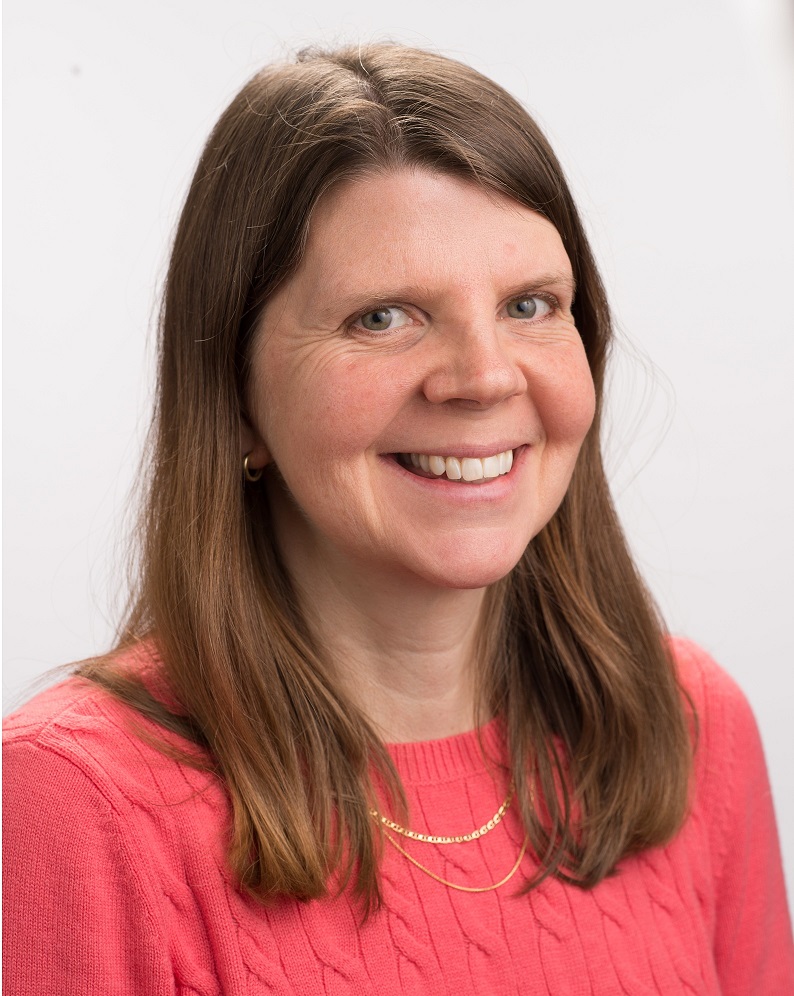}}]{Claire J. Tomlin} is the Charles A. Desoer Professor of Engineering in Electrical Engineering and Computer Sciences at the University of California, Berkeley. She was an Assistant, Associate, and Full Professor in Aeronautics and Astronautics at Stanford from 1998 to 2007, and in 2005 joined Berkeley. Claire works in the area of control theory and hybrid systems, with applications to air traffic management, UAV systems, energy, robotics, and systems biology. She is a MacArthur Foundation Fellow (2006) and an IEEE Fellow (2010), and in 2010 held the Tage Erlander Professorship of the Swedish Research Council at KTH in Stockholm.
\end{IEEEbiography}

\end{document}